\documentclass{amsart}
\usepackage[T1]{fontenc}
\usepackage{amsmath, amsthm, amsfonts, amssymb}
\usepackage{mathrsfs}
\usepackage{graphicx}
\usepackage{url}
\usepackage{enumerate}
\usepackage{fancyhdr}
\usepackage{xspace}
\usepackage{verbatim}
\usepackage{dsfont}

\usepackage{tikz,xcolor}
\usetikzlibrary{arrows,decorations.pathmorphing,backgrounds,fit,automata,shapes.geometric,shapes.arrows,calc,positioning}
\theoremstyle{definition}

\newtheorem{defn}{Definition}[section]

\theoremstyle{remark}
\newtheorem{rmk}[defn]{Remark}

\theoremstyle{plain}

\newtheorem{cor}[defn]{Corollary}

\newtheorem{lem}[defn]{Lemma}

\newtheorem{thm}[defn]{Theorem}

\numberwithin{equation}{section}

\newcommand{\SG}{\text{SG}} 
\newcommand{\DF}{\mathcal{E}}
\newcommand{\domDF}{\mathcal{F}}
\newcommand{\Hil}{\mathcal{H}}
\newcommand{\Mag}{\mathcal{M}}

\DeclareMathOperator{\dom}{dom}
\DeclareMathOperator{\Tr}{Tr}

\def\s-s{self-similar}

\DeclareMathSizes{10}{9}{7}{5}


\def\Tri#1#2#3#4{{ 
\put(#1,#2){\line(3,5){#3}} \put(#1,#2){\line(3,0){#4}} \count223=#1 \advance\count223 by #4
\put(\count223,#2){\line(-3,5){#3}}}}

\def\Spic#1#2#3#4{{ 
\count205=#2\count206=#2\count207=#2\count208=#2\count209=#2
\count210=#1\count211=#3\count214=#3\count212=#4 \divide\count210 by 2 {\ifnum\count210>0
\Spic{\count210}{#2}{#3}{#4} \multiply\count207 by 3 \multiply\count208 by 6 \multiply\count209 by
5 \multiply\count207 by \count210 \multiply\count208 by \count210 \multiply\count209 by \count210
{\advance\count211 by \count207 \advance\count212 by \count209
\Spic{\count210}{#2}{\count211}{\count212}} {\advance\count214 by \count208
\Spic{\count210}{#2}{\count214}{#4}} \else \multiply\count205 by 3 \multiply\count206 by 6
\Tri{#3}{#4}{\count205}{\count206} \fi }}}

\def\Separatedfirstlevel#1{{ 
\count301 =#1 \multiply \count301 by 5 \divide\count301 by 3 
\count302=#1 \multiply\count302 by 3 
\count303=#1  \multiply\count303 by 6 
\count304=\count302 \advance\count304 by #1 
\count305=#1 \multiply \count305 by 5 
\count306=\count305 \advance\count306 by \count301 
\Tri{0}{0}{\count302}{\count303}  
\Tri{\count304}{\count306}{\count302}{\count303} 
\multiply\count304 by 2
\Tri{\count304}{0}{\count302}{\count303}
}}


\allowdisplaybreaks[4]
\title[Magnetic Laplacian on SG]{Magnetic Laplacians of locally exact forms on the Sierpinski Gasket}

\author[Hyde, Kelleher, Moeller, Rogers, Seda]{Jessica~Hyde,  Daniel~Kelleher, Jesse~Moeller, Luke~G.~Rogers, Luis~Seda}

\thanks{Authors supported in part by the National Science Foundation through grant DMS-0505622.}
\subjclass[2000]{Primary 28A80}
\keywords{Analysis on Fractals, Sierpinski Gasket, Magnetic form, Schr\"{o}dinger operator}

\begin{document}
%

\begin{abstract}
We give a mathematically rigorous construction of a magnetic Schr\"{o}dinger operator corresponding to a field with flux through finitely many holes of the Sierpinski Gasket.  The operator is shown to have discrete spectrum accumulating at $\infty$, and it is shown that the asymptotic distribution of eigenvalues is the same as that for the Laplacian. Most eigenfunctions may be computed using gauge transformations corresponding to the magnetic field and the remainder of the spectrum may be approximated to arbitrary precision by using a sequence of approximations by magnetic operators on finite graphs.

\end{abstract}
%

\maketitle
%

\tableofcontents
%

\section{Introduction} \label{introduction}

The properties of  quantum electron on a fractal substrate and under the influence of a magnetic field were studied long ago in the physics literature~\cite{Alexanderetal,Alexander,Rammal,AO,Rammal2,Ghez} as part of a more general program involving quasiperiodic media~\cite{Simon,Bel}, but until recently there has been no mathematically rigorous model for even formulating a magnetic Schr\"{o}dinger equation on a self-similar fractal set.  We remedy this in the special case of the Sierpinski Gasket with certain simple magnetic fields using  mathematical developments from the study of diffusions and  Laplacian-type operators on fractals using probability and functional analysis (see~\cite{BarlowPerkins,Kigamibook,Strichartzbook} and references therein) and the recent  introduction of differential forms associated to this structure~\cite{CS,IRT,ACSY,CSetal,HT,hinzetal,HR}.   These developments in analysis on fractals have benefited from and contributed to the understanding of quantum and statistical physics~\cite{ADT09,ADT10,ABDTV12,Dunne12,Akk}.

Our goal in this paper is to introduce a mathematically rigorous Schr\"{o}dinger equation for a magnetic operator on the Sierpinski Gasket (SG), following the methods of~\cite{IRT,HT,hinzetal,HR}, and study its spectrum, which by~\cite{HR} is  discrete and accumulates only at $\infty$ (Theorem~\ref{thm:mainresultofHR}).   For reasons of mathematical simplicity we consider a somewhat unphysical situation in which the magnetic field has non-zero flux through only finitely many of the ``holes'' in the gasket.  In this situation we are able to prove that the magnetic operator may be approximated in an appropriate sense by a renormalized sequence of magnetic operators on approximating graphs (Theorems~\ref{thm:DFamconvergestoDFa} and~\ref{thm:convergeofMagm}).  This approximation generalizes the well-known approximation of a Dirichlet form on SG by renormalized graph Dirichlet forms~\cite{Kigami89,Kigamibook}.  The  approximating magnetic operators provide a method for numerical study of the spectrum and  some data of this type is in Section~\ref{sec:spectra}. Guided by the observations in this data and using the description of the Laplacian spectrum from the spectral decimation method~\cite{RammalToulouse,FukushimaShima,MT} we show that a field through only finitely many holes of SG modifies only those eigenvalues for which the eigenfunctions have support enclosing these holes (Theorems~\ref{thm:conjugateLapefns} and~\ref{thm:spectrumofMaga}), and conclude that the spectral asymptotics of the magnetic operator are the same as those of the Laplacian (Corollary~\ref{cor:spectasymptotics}).  In principle, for any magnetic field of this type,  one can use our methods to compute the bulk of the spectrum and the associated eigenfunctions by applying suitable gauge transformations to Laplacian eigenfunctions. For the small (asymptotically vanishing) portion of the spectrum that is not found by this method one can choose $\lambda$ and compute all eigenvalues of size less than $\lambda$ by solving finitely many linear algebra problems.  We also give a description of the basic modification that a magnetic field makes to the Laplacian spectrum by examining periodic functions on a covering space (Section~\ref{sec:ladderfractafold}).  In the case of SG the relevant covering space is a fractafold (as defined in~\cite{STR03}) called the Sierpinski Ladder~\cite{ST12}.

\section{Analysis and $1$-forms on SG}\label{sec:SG}

The Sierpinski Gasket (SG) is the attractor of the Iterated Function System $\{ F_j = \frac{1}{2} (x - p_j ) + p_j \}$, $j=0,1,2$ for $\{p_{j}\}$ the vertices of an equilateral triangle in $\mathbb{R}^2$. The image of SG under an $m$-fold composition of these maps is called an $m$-cell.  We index these by words: let $w=w_{1}w_{2}\dotsc w_{m}\in\{0,1,2\}^{n}$ be a word of length $|w|=m$ and $F_{w}=F_{w_{1}}\circ \dotsm \circ F_{w_{m}}$.  Then $F_{w}(\SG)$ is an $m$-cell.  From the cellular structure of SG we obtain a sequence of graphs.  Let $V_{0}=\{p_{0},p_{1},p_{2}\}$ and inductively $V_{m}=\cup_{j=0,1,2} F_{j}(V_{m-1})$.  The $m^{\text{th}}$-level graph approximation of SG is the graph with vertices $V_{m}$ and edges between pairs of vertices that are contained in a common $m$-cell. We write $x\sim_{m}y$ to denote that there is an edge between $x,y\in V_{m}$ in the $m$-scale graph.  The set $V_{\ast}=\cup_{m}V_{m}$ is dense in SG.

Analysis on SG is based on the existence of a Dirichlet form and an associated Laplacian. Of the available constructions~\cite{BarlowPerkins,Lindstrom,Kigamibook} we follow the method of Kigami~\cite{Kigamibook}, some features of which are as follows. Proofs of all of the results stated may be found in~\cite{Kigamibook,Strichartzbook}. We endow SG with the (unique)  self-similar probability measure $\mu$ that is invariant under the symmetries of the triangle with vertices the points $p_j$.

\begin{enumerate}[{A}1]
\item\label{AselfsimilardecompofDF} There is a Dirichlet form $\DF$ on SG with domain $\domDF\subset L^{2}(\mu)$ consisting of continuous functions. $\DF$ may be localized to any $m$-cell and is self-similar with scaling factor $\frac{5}{3}$.  Specifically, for a word $w$ with $|w|=m$ let $\DF_{w}(f,g)=(\frac{5}{3}\bigr)^{m}\DF(f\circ F_{w},g\circ F_{w})$ so $\DF_{w}$ is a Dirichlet form on $F_{w}(\SG)$.  Then $\DF(f,g)=\sum_{|w|=m}\DF_{w}(f,g)$.
\item\label{ADFislimitofgraphs} $\DF$ may be obtained as a limit of forms on the graphs.  For $f,g: V_{*} \to \mathbb{R}$, $m \in \mathbb{N}$, let $\DF_{m}(f,g)=\sum_{x\sim_{m}y} (f(x)-f(y))(g(x)-g(y))$.  Then $\bigl(\frac{5}{3}\bigr)^{m}\DF_{m}(f,f)$ is non-decreasing and converges to $\DF(f,f)$ if $f\in\domDF$.
\item From standard considerations there is a non-positive definite self-adjoint Laplacian associated to $\DF$. We define $f\in\dom(\Delta)\subsetneq\domDF$ to mean there is a continuous $\Delta f$ such that  $\DF(f,g)=\langle -\Delta f,g\rangle_{L^{2}}$ for all $g\in\domDF_{0}$, where $\domDF_{0}\subset\domDF$ is the subspace of functions that vanish on $V_{0}$. 
\item Let $df(p_{j}) = \lim_{m\to\infty} \bigl(\frac{5}{3}\bigr)^{m} \bigl(2f(p_{j}) -f(F_{j}^{\circ m} p_{(j+1)})-f(F_{j}^{\circ m} p_{(j+2)})\bigr)$, where the subscripts are taken modulo $3$.  If $f\in\dom(\Delta)$ then this limit exists on $V_{0}$ and there is a Gauss-Green formula $\DF(f,g)=\langle -\Delta f,g\rangle_{L^{2}}+\sum_{j=0}^{2} df(p_{j})g(p_{j})$;  we call $df(p_{j})$ the normal derivative of $f$ at $p_{j}$.  Both $df$ and the Gauss-Green formula may be localized to any $m$-cell.
\item\label{Delta_is_graph_limit} $\Delta$ may be obtained as a limit of graph Laplacians. For $f : V_{*} \to \mathbb{R}$, $m \in \mathbb{N}$, let $\Delta_m f(x) = \sum_{y \sim_{m} x} (f(y) - f(x))$.  Then $\Delta f(x) = \frac{3}{2} \lim_{m\to\infty} 5^{m}\Delta_{m}f(x)$
\item\label{Aharmonic} If $X\subset\SG$ is finite and $g:X\to\mathbb{R}$ then there is a unique $f\in\domDF$ such that $f|_{X}=g$ and $\DF(f)$ is minimized; $f$ is called the harmonic extension of $g$ and satisfies $\Delta f(x)=0$ for $x\in\SG\setminus X$.  If $X=V_{m}$ then also $\Delta_{n}f(x)=0$ for all $n>m$ and $x\in V_{n}\setminus V_{m}$, and $f$ is called $m$-harmonic.
\end{enumerate}

Differential forms on certain spaces that include the Sierpinski Gasket have been studied in~\cite{CS,IRT,ACSY,CSetal,HT,hinzetal}.  We follow the approach in~\cite{IRT}, which introduces $1$-forms as  a Hilbert space $\Hil$ generated by tensor products $f\otimes g$ with $f,g\in\domDF$, and which is a module over $\domDF$.  There is then a derivation $\partial:\domDF\to\Hil$ such that $\|\partial f\|_{\Hil}^{2}=\DF(f,f)$ and the image of $\partial$ is the space of exact forms.

The key feature that we need from~\cite{IRT} is that the action of $\domDF$ on $\Hil$ by multiplication extends to permit multiplication by much more general functions.  In particular, multiplication by the characteristic function $\mathds{1}_{w}$ of an $m$-cell $F_{w}(X)$ is well-defined.  This permits a cellular decomposition of $\Hil$ akin to that described in~(A\ref{AselfsimilardecompofDF}) and a notion of graph approximation like that in~(A\ref{ADFislimitofgraphs}).  Proofs of the following results are in~\cite{IRT}.
\begin{enumerate}[{F}1]
\item \label{Fselfsimilar} Let $\Hil_{w}$ be the space of $1$-forms constructed from $\bigl(\DF_{w}, \domDF|_{F_{w}(\SG)}\bigr)$ in the same manner as $\Hil$ is constructed from $(\DF,\domDF)$. If $h_{w}=f|_{F_{w}(\SG)}\otimes g|_{F_{w}(\SG)}$ then the map $h_{w}\mapsto (f\circ F_{w})\otimes (g\circ F_{w})=h$ takes the dense subspace of generators of $\Hil_{w}$ to those of $\Hil$ and has $\|h_{w}\|_{\Hil_{w}}^{2}=\bigl(\frac{5}{3}\bigr)^{m} \|h\circ F_{w}\|_{\Hil}^{2}$, so extends to an isomorphism of $\Hil_{w}$ to $\Hil$.
\item\label{Fcelldecomp} $\Hil_{w}$ is isometrically isomorphic to the subspace $\{a\mathds{1}_{w}:a\in\Hil\}$ via the continuous extension of the identification of $f|_{F_{w}(\SG)}\otimes g|_{F_{w}(\SG)}$  with $(f\otimes g)\mathds{1}_{w}$ and there is a direct sum decomposition $\Hil=\bigoplus_{|w|=m}\Hil_{w}$.
\item\label{FspacesHilm} Let $\Hil_{m}$ be the subspace of $\Hil$ generated by $\bigl\{f\otimes\mathds{1}_{w}: f\text{ is $m$-harmonic and } |w|=m \bigr\}$.  Then $\Hil_{m}\subset\Hil_{m+1}$ for all $m$ and $\cup_{m}\Hil_{m}$ is dense in $\Hil$.  The preceding results imply that $\Hil_{m}$ is isomorphic to a direct sum of copies of $\Hil_{0}$, with  one copy of $\Hil_{0}$ for each $m$-cell.  Moreover $\Hil_{0}$ is isomorphic to the harmonic functions modulo constants on SG, and is obtained from this space by applying the derviation $\partial$.
\end{enumerate}
Though it is not made explicit in~\cite{IRT}, the result in~(F\ref{FspacesHilm})  gives a connection to $1$-forms on graphs.  Recall that a $1$-form on a graph is a simply a function on the set of directed edges.  Let $a\in\Hil_{m}$ and $e_{xy}$ denote the edge from $x$ to $y$ in the $m$-scale graph.  Take $w$ with $|w|=m$ so $F_{w}(\SG)$ is the unique cell containing $e_{xy}$ and use~(F\ref{FspacesHilm}) to obtain  a harmonic function modulo constants $A_{w}$ corresponding to $a\mathds{1}_{w}$.  If we set $A(e_{xy})=A_{w}(y)-A_{w}(x)$ then $A$ is a well-defined function on directed edges, so is a $1$-form on the $m$-scale graph.  Moreover it is exact at scale $m$ because on each $m$-cell $F_{w}(\SG)$ it is the derivative of $A_{w}$.  The norm of $a\in\Hil_{m}$ is simply $\|a\|_{\Hil}^{2}=\sum_{|w|=m} \DF(A_{w})=\sum_{x\sim_{m}y}A(e_{xy})^{2}$.

This permits us to understand the space $\Hil$ as a generalization of $(\DF,\domDF)$, because it exhibits the $\Hil$-norm as a renormalized limit of $L^{2}$-norms. To make this connection more precise we need some definitions.  Let $h_{j}$ denote the harmonic function on $\SG$ which has values $h_{j}(p_{j})=0$, $h_{j}(p_{j-1})=-1$ and $h_{j}(p_{j+1})=1$.
\begin{defn}\label{defn:trace}
 For any two points joined by an edge in the $m$-scale graph there is $j\in\{0,1,2\}$ and  a word $w$ with $|w|=m$  such that the points are $x=F_{w}(p_{j-1})$ and $y=F_{w}(p_{j+1})$ (subindices are taken modulo $3$).  Define $\Tr_{m}:\Hil\to\Hil_{m}$ by setting the value on the edge $e_{xy}$ from $x$ to $y$  to be
\begin{equation*}
	(\Tr_{m}a) (e_{xy}) = \frac{1}{3} \langle a, \partial h_{j} \rangle_{\Hil}.
	\end{equation*}
A sequence $\{a_{m}\}_{1}^{\infty}\subset\Hil$ is called {\em compatible} if $\Tr_{m}a_{m+1}=a_{m}$ for all $m$.
\end{defn}
%
%
The following theorem should be compared to the results in Section~4 of~\cite{ACSY}.  It gives a full description of $1$-forms on SG as limits of $1$-forms on the approximating graphs.
\begin{thm}
The map $\Tr_{m}:\Hil\to\Hil_{m}$ is a projection.  If $a\in\Hil$ then the sequence $\{a_{m}\}$ of projections onto $\Hil_{m}$ is compatible, $a_{m}\to a$ in $\Hil$ and $\|a_{m}\|_{\Hil}\uparrow\|a\|_{\Hil}$.  Conversely, if $\{a_{m}\}$ is a compatible sequence then $a_{m}\in\Hil_{m}$ for all $m$; if we further assume that $\|a_{m}\|_{\Hil}$ is bounded then there is $a\in\Hil$ such that $a_{m}\to a$ and $a_{m}$ is the projection of $a$ to $\Hil_{m}$ for all $m$.
\end{thm}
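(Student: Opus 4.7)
The plan is to show $\Tr_m$ coincides with the orthogonal projection $P_m\colon\Hil\to\Hil_m$, after which every remaining assertion follows from standard facts about nested orthogonal projections whose image subspaces have dense union. Linearity and continuity of $\Tr_m$ are immediate from the bilinearity and boundedness of the inner product used in Definition~\ref{defn:trace}, and $\Tr_m a\in\Hil_m$ by construction, since each $m$-scale graph form identifies with an element of $\Hil_m$ via the correspondence described in the paragraph preceding Definition~\ref{defn:trace}. It remains to verify $\Tr_m|_{\Hil_m}=\mathrm{id}$ and $\Tr_m|_{\Hil_m^\perp}=0$.

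The central local computation is the identity
\begin{equation*}
    \tfrac{1}{3}\langle\partial A,\partial h_j\rangle_\Hil=\tfrac{1}{3}\DF(A,h_j)=A(p_{j+1})-A(p_{j-1})
\end{equation*}
valid for any harmonic $A$ on SG; it follows by reducing $\DF(A,h_j)$ to its level-zero graph form (harmonicity together with (A\ref{AselfsimilardecompofDF})--(A\ref{ADFislimitofgraphs})) and plugging in the boundary values of $h_j$ on $V_0$. For $a\in\Hil_m$, the cellular decomposition (F\ref{Fcelldecomp}) writes $a\mathds{1}_w$ as an exact form on $F_w(\SG)$ coming from a harmonic function $A_w$ via (F\ref{FspacesHilm}); transporting the identity above cellwise through the isomorphism (F\ref{Fselfsimilar}) produces the edge value $A_w(y)-A_w(x)$ for $x=F_w(p_{j-1})$ and $y=F_w(p_{j+1})$, matching the graph form of $a$ and giving $\Tr_m a=a$. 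For the orthogonal-complement condition, (F\ref{Fcelldecomp}) together with (F\ref{FspacesHilm}) shows that $a\perp\Hil_m$ is equivalent to $a\mathds{1}_w$ being orthogonal to the exact forms on $F_w(\SG)$ for every $w$; expanding $\langle a,\partial h_j\rangle_\Hil$ cellwise then makes every edge value of $\Tr_m a$ vanish.

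With $\Tr_m=P_m$ established, compatibility $\Tr_m a_{m+1}=a_m$ is the identity $P_m P_{m+1}=P_m$, valid because $\Hil_m\subset\Hil_{m+1}$. Strong convergence $a_m\to a$ and monotone convergence $\|a_m\|_\Hil\uparrow\|a\|_\Hil$ follow from the density of $\bigcup_m\Hil_m$ in $\Hil$ from (F\ref{FspacesHilm}) together with the Pythagorean identity for nested orthogonal projections. For the converse, compatibility forces $a_m\in\Hil_m$ and $a_{m+1}-a_m\perp\Hil_m$, so $\|a_{m+1}\|_\Hil^2=\|a_m\|_\Hil^2+\|a_{m+1}-a_m\|_\Hil^2$; boundedness of $\|a_m\|_\Hil$ makes $\sum\|a_{m+1}-a_m\|_\Hil^2$ finite, so $\{a_m\}$ is Cauchy and converges to some $a\in\Hil$, and iterating compatibility with continuity of $\Tr_m$ yields $\Tr_m a=\lim_n \Tr_m a_n=a_m$. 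The principal difficulty is the cellwise verification of $\Tr_m|_{\Hil_m}=\mathrm{id}$: one must correctly interpret the single inner product in Definition~\ref{defn:trace} edge by edge and track the $(5/3)^m$ scaling from (F\ref{Fselfsimilar}) so that the trace formula reduces on each $m$-cell to the local identity above.
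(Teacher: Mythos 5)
Your proposal is correct and follows essentially the same route as the paper: both reduce the identification $\Tr_m=P_m$ to a single-cell computation pairing against $\partial h_j$ (your identity $\tfrac13\DF(A,h_j)=A(p_{j+1})-A(p_{j-1})$ is exactly the paper's $\tfrac{2}{6}\langle a,\partial h_j\rangle$ calculation, organized as ``identity on $\Hil_m$, zero on $\Hil_m^\perp$'' rather than as an explicit expansion in the orthogonal basis $\{\partial h_j,\partial\tilde h_j\}$), and both then dispatch compatibility, monotone convergence, and the converse via the same nested-projection and Pythagorean arguments.
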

\begin{proof}
The main thing we need to prove is that $\Tr_{m}$ is the projection onto $\Hil_{m}$. From~(F\ref{Fcelldecomp}) it is apparent that the projection can be taken one cell at a time, and the self-similarity in~(F\ref{Fselfsimilar}) implies that all cells are the same, so it suffices to show $\Tr_{0}$ is the projection onto $\Hil_{0}$.  We recall that $\Hil_{0}$ is obtained from the $2$-dimensional space of harmonic functions on SG by applying the derivation.

Let $\tilde{h}_{j}$ be harmonic on SG with $\tilde{h}_{j}(p_{j})=1$, $\tilde{h}_{j}(p_{j+1})=\tilde{h}_{j}(p_{j-1})=0$.   Symmetry shows that $h_{j}$ and $\tilde{h}_{j}$ are orthogonal, so $\partial h_{j}$ and $\partial\tilde{h}_{j}$ are an orthogonal basis for $\Hil_{0}$.  Suppose we project $a\in\Hil$  onto $a_{0}\in\Hil_{0}$ and compute the corresponding function $A_{\emptyset}$.  Since $\tilde{h}_{j}(p_{j+1})=\tilde{h}_{j}(p_{j-1})$, the difference $A(p_{j+1})-A(p_{j})$  is determined by the component involving $h_{j}$.  Precisely, it is  
\begin{equation*}
	A(p_{j+1})-A(p_{j})
	= \frac{1}{\DF(h_{j})} \langle a,\partial h_{j}\rangle_{\Hil} \bigl(h(p_{j+1})-h(p_{j})\bigr)
	= \frac{2}{6} \langle a,\partial h_{j}\rangle_{\Hil}
	=\Tr_{0}a (e_{p_{j-1}p_{j}}).
	\end{equation*}
Thus the trace assigns the same values to the edges as does the projection, and they must coincide.  Note that, in particular, this means the values of $\Tr_{0}a$ on the three edges $e_{01}$, $e_{12}$, $e_{20}$ must sum to zero, and indeed we find from the definition that they do because $\sum_{j}h_{j}$ is identically zero, so $\sum_{j}\Tr_{0}a(e_{j(j+1)})=0$.

Having established that $\Tr_{m}$ is the projection onto $\Hil_{m}$ it is immediate that the sequence of projections $a_{m}$ of $a\in\Hil$ is compatible, and~(F\ref{FspacesHilm}) shows $a_{m}\to a$ in $\Hil$ and $\|a_{m}\|_{\Hil}\uparrow\|a\|_{\Hil}$.  For the converse, if $a_{m}$ is a compatible sequence then the fact that $a_{m}=\Tr_{m}a_{m+1}$ implies $a_{m}\in\Hil_{m}$ for all $m$ and that $\|a_{m}\|_{\Hil}$ is an increasing sequence.  If we suppose that  $\|a_{m}\|_{\Hil}$  is bounded then using the Pythagorean  decomposition $\|a_{n}\|_{\Hil}^{2}=\|a_{m}\|_{\Hil}^{2}+\|a_{n}-a_{m}\|_{\Hil}^{2}$, $n>m$, for projection in a Hilbert space we see $\|a_{n}-a_{m}\|_{\Hil}^{2}\leq \bigl(\sup_{n}\|a_{n}\|_{\Hil}^{2}\bigr)-\|a_{m}\|_{\Hil}^{2}\to0$ as $m,n\to\infty$, so the sequence is Cauchy with limit $a\in\Hil$.  Finally, the composition $\Tr_{m}\circ\Tr_{m+1}\circ\dotsm\circ\Tr_{n}$ shows $a_{m}=\Tr_{m}a_{n}$ for all $n>m$ and, by taking the limit, $a_{m}=\Tr_{m}a$.
\end{proof}


\section{Magnetic forms, Magnetic Laplacian and gauge transformations}\label{sec:magnetic}
Following~\cite{hinzetal, HR} a magnetic differential  may be defined as a deformation of $\partial$.  To do so we treat a real-valued $1$-form $a\in\Hil$ as an operator $\domDF\to\Hil$ via multiplication, so $f\mapsto fa$.  Then $(\partial+ia):\domDF\to\Hil$ is the magnetic differential obtained by deforming $\partial$ via the form $a\in\Hil$.  With this approach an essential result is the following theorem.
\begin{thm}[\protect{\cite{HR}}]\label{thm:mainresultofHR}
The quadratic form $\DF^{a}(f)=\|(\partial+ia)f\|_{\Hil}^{2}$ with domain $\domDF$ is closed on $L^{2}(\mu)$.  Thus there is an associated non-positive definite self-adjoint magnetic (Neumann) Laplacian $\Mag^{a}_{N}$ satisfying $$\DF^{a}(f,g)=\langle -\Mag^{a}_{N}f,g\rangle_{L^{2}(\mu)}$$ for all $g\in\domDF$. Moreover $\Mag^{a}_{N}$ has compact resolvent, so the spectrum of $-\Mag^{a}_{N}$ is a sequence $0\leq\kappa_{1}\leq\kappa_{2}\leq\dotsm$ accumulating only at $\infty$.
\end{thm}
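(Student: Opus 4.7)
The plan is to establish equivalence of $\DF^{a}$ and $\DF$ as closed forms on $L^{2}(\mu)$ with domain $\domDF$, then invoke the standard spectral theory of closed nonnegative forms, and finally use compact embedding of $\domDF$ into $L^{2}(\mu)$ to obtain discrete spectrum.

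The first step is a relative form-boundedness argument. The module structure of $\Hil$ over $\domDF$ given in~\cite{IRT}, together with the fact that multiplication by characteristic functions of cells is contractive (see~(F\ref{Fcelldecomp})), implies the pointwise bound $\|fa\|_{\Hil}\leq\|f\|_{\infty}\|a\|_{\Hil}$ for $f\in\domDF$ and $a\in\Hil$. Functions in $\domDF$ are H\"older continuous on the compact set SG, and the compact embedding $\domDF\hookrightarrow L^{2}(\mu)$ combined with a standard interpolation argument yields the Gagliardo--Nirenberg-type inequality $\|f\|_{\infty}^{2}\leq\epsilon\,\DF(f,f)+C_{\epsilon}\|f\|_{L^{2}(\mu)}^{2}$ for every $\epsilon>0$. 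Consequently $\|af\|_{\Hil}^{2}$ has arbitrarily small relative bound with respect to $\DF$. Expanding
\[
\DF^{a}(f)=\DF(f,f)+2\,\Im\langle\partial f,af\rangle_{\Hil}+\|af\|_{\Hil}^{2}
\]
and controlling the cross term by Young's inequality produces constants $c_{1},c_{2},C>0$ with
\[
c_{1}\DF(f,f)-C\|f\|_{L^{2}(\mu)}^{2}\leq\DF^{a}(f)\leq c_{2}\DF(f,f)+C\|f\|_{L^{2}(\mu)}^{2},
\]
so the $\DF^{a}$-graph norm on $\domDF$ is equivalent to the $\DF$-graph norm.

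Since $\DF$ is closed on $L^{2}(\mu)$, this equivalence immediately implies that $\DF^{a}$ is closed with domain $\domDF$: any $\DF^{a}$-Cauchy sequence is $\DF$-Cauchy, converges in $\domDF$, and the limit matches by continuity of $\DF^{a}$ in the equivalent norm. The representation theorem for closed nonnegative sesquilinear forms on a Hilbert space then produces a unique self-adjoint non-positive operator $\Mag^{a}_{N}$ satisfying $\DF^{a}(f,g)=\langle-\Mag^{a}_{N}f,g\rangle_{L^{2}(\mu)}$ for $g\in\domDF$. For the discreteness claim I would apply the classical criterion that compact embedding of the form domain into the ambient Hilbert space is equivalent to compact resolvent. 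The compactness of $(\domDF,\DF+\|\cdot\|_{L^{2}(\mu)}^{2})\hookrightarrow L^{2}(\mu)$ follows from the H\"older estimate plus Arzel\`a--Ascoli, and by the equivalence of graph norms the form domain of $\DF^{a}$ is also compactly embedded. Hence $-\Mag^{a}_{N}$ has compact resolvent and nonnegative discrete spectrum $0\leq\kappa_{1}\leq\kappa_{2}\leq\cdots\uparrow\infty$.

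The main obstacle is the module-multiplication estimate together with the Gagliardo--Nirenberg-type inequality; the module bound requires unpacking the tensor structure of $\Hil$ in~\cite{IRT} and extending continuously from generators $f\otimes g$ to the action of $L^{\infty}$ functions, while the interpolation inequality combines the Morrey-type H\"older estimate for $\domDF$ with compact embedding in $L^{2}(\mu)$. Once these are in hand, the remainder is a routine application of the KLMN-type representation theorem and the standard Rellich compactness criterion for discreteness of spectrum.
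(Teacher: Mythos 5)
The paper gives no proof of this statement---it is imported verbatim from~\cite{HR}---but your argument is correct and is essentially the one given there: the module bound $\|fa\|_{\Hil}\leq\|f\|_{\infty}\|a\|_{\Hil}$ from~\cite{IRT}, the Ehrling-type estimate $\|f\|_{\infty}^{2}\leq\epsilon\,\DF(f)+C_{\epsilon}\|f\|_{L^{2}(\mu)}^{2}$ (obtainable either by interpolation through the compact embedding into $C(\SG)$ or directly from the self-similar resistance scaling on $m$-cells), KLMN to get closedness and the self-adjoint operator, and compactness of the form-domain embedding for discreteness of the spectrum. No gaps; this matches the cited proof.
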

The same argument provides that the quadratic form $(\DF^{a},\domDF_{0})$ is closed on the space $L^{2}(\SG\setminus V_{0},\mu)$ and defines a magnetic (Dirichlet) Laplacian $\Mag^{a}_{D}$ with compact resolvent and $\DF^{a}(f,g)=\langle-\Mag^{a}_{D}f,g\rangle$ for all $g\in\domDF_{0}$.   Henceforth we will just use the Dirichlet magnetic operator and will denote it $\Mag^{a}$.  Much of our work transfers to the Neumann magnetic operator with minor changes.

\begin{rmk}
We are using the complexification of each of the spaces $L^2(\mu)$, $\domDF$, $\Hil$, $\dom(\Delta)$ as well as the subspaces $\domDF_0$, $\Hil_m$, etc.  These are standard, but for the convenience of the reader we recall that one may complexify $\domDF$ by endowing $\domDF+i\domDF$ with the form
\begin{equation*}
	\DF(f,g) = \DF(f_1,g_1) -i\DF(f_1,g_2) + i\DF(f_2,g_1) +\DF(f_2,g_2)
	\end{equation*}
where $f=f_1+if_2$ and $g=g_1+ig_2$. In this case the finite approximations in~(A.\ref{ADFislimitofgraphs}) become $\DF_m(f,g)=\sum_{x\sim_m} (f(x)-f(y))(\overline{g(x)-g(y)})$.  One may then construct $\Hil$ from the complexified version of $\domDF\otimes\domDF$ in the same manner as was done in the real case in~\cite{IRT} and discussed in Section~\ref{sec:SG}.
\end{rmk}

We wish to study the spectrum of $\Mag^{a}$ by making graph approximations.  For this reason we introduce a graph magnetic form and a graph magnetic Laplacian.  The connection between these and $\DF^{a}$ and $M^{a}$  is not immediately obvious but will rapidly become apparent.

\begin{defn}
Suppose $a\in\Hil$ is real-valued and for each $m\in\mathbb{N}$ let $a_{m}$ be the projection of $a$ to $\Hil_{m}$.  For $f,g: V_{*} \to \mathbb{C}$ define
\begin{gather}
	\DF_{m}^{a_{m}}(f) =\sum_{x,y: x\sim_{m} y} \Bigl| f(x)- f(y)e^{i a_{m}(e_{xy})} \Bigr|^{2} \\
	\Mag^{a_{m}}_{m} f(x) = - \sum_{y:y\sim_{m}x} \Bigl( f(x)- f(y)e^{i a_{m}(e_{xy})} \Bigr) \quad\text{ for }x\in V_{m}\setminus V_{0}.
	\end{gather} 
\end{defn}
We have the usual relation
\begin{equation} \label{eqn:DFmandMagfm}
	\DF_{m}^{a_{m}}(f,g) = \langle -\Mag^{a_{m}}_{m} f, g \rangle_{l^{2}(V_{m})},
\end{equation}
when $g=0$ on $V_0$, as may be verified by direct computation:
\begin{align*}
	\lefteqn{2 \sum_{x,y: x\sim_{m} y} \Bigl( f(x)- f(y)e^{i a_{m}(e_{xy})} \Bigr)\overline{\Bigl( g(x)- g(y)e^{i a_{m}(e_{xy})} \Bigr)}  } \quad&\notag\\
	&= \sum_{x\in V_{m}\setminus V_0} \overline{g(x)} \sum_{y\sim_{m}x} \Bigl( f(x)- f(y)e^{i a_{m}(e_{xy})} \Bigr) 
		- \sum_{y\in  V_{m}\setminus V_0} \overline{g(y)} \sum_{ x\sim_{m}y} \Bigl( f(x) e^{-i a_{m}(e_{xy})} - f(y) \Bigr) \notag\\
	&= \sum_{x\in V_{m}\setminus V_0} \overline{g(x)} \sum_{y\sim_{m}x} \Bigl( f(x)- f(y)e^{i a_{m}(e_{xy})} \Bigr) 
		+ \sum_{x\in  V_{m}\setminus V_0} \overline{g(x)} \sum_{ y\sim_{m}x} \Bigl( f(x) - f(y)e^{i a_{m}(e_{yx})} \Bigr) \notag\\
	&=2\sum_{x\in  V_{m}\setminus V_0} \bigl( -\Mag_{m}^{a_{m}}f(x) \bigr) \overline{g(x)}.
	\end{align*}
Note that we need not sum over $V_0$ because $g$ vanishes there. The equality holds for arbitrary $g$ if $ \sum_{y\sim_{m}x} \Bigl( f(x)- f(y)e^{i a_{m}(e_{xy})} \Bigr) =0$ for $x\in V_0$.

\begin{lem}
$\Bigl(\frac{5}{3}\Bigr)^{m}\DF_{m}^{a_{m}}(f)$ converges as $m\to\infty$ if and only if $f\in\domDF$.
\end{lem}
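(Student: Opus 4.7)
The plan is to compare the magnetic graph energy $\DF_m^{a_m}(f)$ with the non-magnetic one $\DF_m(f)$ and then exploit the monotone convergence of $(5/3)^m\DF_m(f)$ recorded in~(A\ref{ADFislimitofgraphs}). Starting from the identity
$$f(x)-f(y)e^{ia_m(e_{xy})} = \bigl(f(x)-f(y)\bigr) + f(y)\bigl(1-e^{ia_m(e_{xy})}\bigr),$$
Minkowski's inequality in $\ell^2$ over directed edges of the level-$m$ graph yields
$$\Bigl|\DF_m^{a_m}(f)^{1/2} - \DF_m(f)^{1/2}\Bigr| \leq \biggl(\sum_{x\sim_m y}|f(y)|^2\bigl|1-e^{ia_m(e_{xy})}\bigr|^2\biggr)^{1/2},$$
so everything reduces to controlling the right-hand side.

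For that I would use $|1-e^{ia}|\leq|a|$ together with the graph description of $\Hil_m$ set up just above. Combining the scaling~(F\ref{Fselfsimilar}), the cell decomposition~(F\ref{Fcelldecomp}), and the identity $\DF(A_w)=\sum_{e\subset F_w(\SG)}A(e_{xy})^2$ that holds for the harmonic $A_w$ associated to $a\mathds{1}_w$, one obtains $(5/3)^m\sum_{x\sim_m y}a_m(e_{xy})^2 = c\,\|a_m\|_\Hil^2$ for a fixed constant $c$, and $\|a_m\|_\Hil\leq\|a\|_\Hil$ since $a_m=\Tr_m a$ is a projection by the preceding theorem. Since $f\in\domDF$ is continuous on $\SG$ by~(A1), $\|f\|_\infty<\infty$, so multiplying the Minkowski estimate by $(5/3)^{m/2}$ produces the key uniform bound
$$\Bigl|\bigl((5/3)^m\DF_m^{a_m}(f)\bigr)^{1/2} - \bigl((5/3)^m\DF_m(f)\bigr)^{1/2}\Bigr| \leq C\|f\|_\infty\|a\|_\Hil.$$

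One direction of the equivalence is now immediate: if $(5/3)^m\DF_m^{a_m}(f)$ converges it is bounded, the estimate then bounds $(5/3)^m\DF_m(f)$, and monotonicity from~(A\ref{ADFislimitofgraphs}) forces the latter to converge, so $f\in\domDF$. For the converse I would promote boundedness to actual convergence by expanding
$$|f(x)-f(y)e^{ia_m(e_{xy})}|^2 = |f(x)-f(y)|^2 + 2\Re\bigl[(f(x)-f(y))\overline{f(y)}(1-e^{-ia_m(e_{xy})})\bigr] + |f(y)|^2\bigl|1-e^{ia_m(e_{xy})}\bigr|^2,$$
summing, scaling by $(5/3)^m$, and showing each piece separately has a limit. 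The first tends to $\DF(f)$ by~(A\ref{ADFislimitofgraphs}); for the third, $|1-e^{ia}|^2=a^2+O(a^4)$ reduces it to $(5/3)^m\sum|f(y)|^2 a_m(e_{xy})^2$, which converges to a multiple of $\|af\|_\Hil^2$ using continuity of $|f|^2$, strong convergence $a_m\to a$ in $\Hil$, and the module structure of $\Hil$ over $\domDF$ from~\cite{IRT}; the cross term is then obtained by polarization, with total limit $\DF^a(f)=\|(\partial+ia)f\|_\Hil^2$.

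The hardest step is the cross term. Cauchy--Schwarz alone bounds it by $O(\|f\|_\infty\|a\|_\Hil\DF(f)^{1/2})$, which gives boundedness but not convergence; extracting an actual limit requires signed cancellation. I would handle this by rewriting the discrete cross sum as an inner product in $\Hil_m$ between the graph-level representatives of $\partial f$ and $af$ and then passing to the limit via the density of $\cup_m\Hil_m$ in $\Hil$ and the compatibility $\Tr_m a_{m+1}=a_m$ from the preceding theorem.
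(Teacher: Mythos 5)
Your core device---the two-sided comparison of $\DF_{m}^{a_{m}}(f)$ with $\DF_{m}(f)$ via $|1-e^{i\alpha}|\leq|\alpha|$ and the uniform bound $\bigl(\frac{5}{3}\bigr)^{m}\sum_{x\sim_{m}y}|a_{m}(e_{xy})|^{2}\leq\|a\|_{\Hil}^{2}$---is exactly the paper's, but there is a circularity in how you launch the forward direction. Your ``key uniform bound'' carries a factor $\|f\|_{\infty}$, which you justify by citing continuity of $f\in\domDF$; in the direction where convergence of $\bigl(\frac{5}{3}\bigr)^{m}\DF_{m}^{a_{m}}(f)$ is the \emph{hypothesis} and $f\in\domDF$ is the \emph{conclusion}, you are not yet entitled to boundedness of $f$ (the lemma quantifies over arbitrary $f:V_{*}\to\mathbb{C}$). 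The paper closes this hole first: since $\bigl||f(x)|-|f(y)|\bigr|\leq\bigl|f(x)-f(y)e^{ia_{m}(e_{xy})}\bigr|$, boundedness of the magnetic energies forces $\bigl(\frac{5}{3}\bigr)^{m}\DF_{m}(|f|)$ to be bounded, hence by monotonicity $|f|\in\domDF$ and $f$ is bounded. You need some such preliminary step before your Minkowski estimate applies.

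In the converse direction you attempt strictly more than the paper does, and more than it claims to know how to do. The paper's proof only transfers \emph{boundedness} between the two sequences and then uses monotonicity of $\bigl(\frac{5}{3}\bigr)^{m}\DF_{m}(f)$; it does not identify a limit, and the remark immediately after the lemma concedes that convergence to $\DF^{a}(f)$ is established only later (Theorem~\ref{thm:DFamconvergestoDFa}) under the local Coulomb gauge hypothesis, the general case being left as a conjecture. Your three-term expansion founders exactly where you flag the difficulty: the cross term. Rewriting it as an inner product in $\Hil_{m}$ and ``passing to the limit via density'' is not routine, because the edge function $f(y)a_{m}(e_{xy})$ is not the projection of $af$ onto $\Hil_{m}$ (the module action does not commute with $\Tr_{m}$), so neither the compatibility $\Tr_{m}a_{m+1}=a_{m}$ nor density of $\cup_{m}\Hil_{m}$ delivers the signed cancellation you need. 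If ``converges'' is read literally your program is the right target but incomplete; if the lemma is read, as the paper's own proof effectively does, as an equivalence of boundedness, then the expansion is unnecessary and the comparison estimate plus the fix above suffices.
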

\begin{proof}
Observe from $\bigl| |f(x)|-|f(y)|\bigr|\leq \bigl| f(x)-f(y)e^{i a_{m}(e_{xy})} \bigr|$ that the convergence in the statement implies $\Bigl(\frac{5}{3}\Bigr)^{m}\DF_{m}(|f|)$ is finite and therefore $|f|\in\domDF$.  In particular $f$ is bounded.  The converse assumption $f\in\domDF$ also ensures $f$ is bounded.

Using boundedness of $f$ we may estimate as follows
\begin{align*}
	\Bigl| f(x) - f(y)e^{i a_{m}(e_{xy})} \Bigr|^{2} 
	&\leq \Bigl( \bigl| f(x) - f(y) \bigr| + |f(y)| \bigl| 1-e^{ia_{m}(e_{xy})} \bigr| \Bigr)^{2}\\
	&\leq 2 | f(x) - f(y) |^{2} + 2\|f\|_{\infty}^{2} |a_{m}(e_{xy})|^{2} 
	\end{align*}
and similarly
\begin{equation*}
	| f(x) - f(y) |^{2}
	\leq \Bigl| f(x) - f(y)e^{i a_{m}(e_{xy})} \Bigr|^{2}  + 2 \|f\|_{\infty}^{2} |a_{m}(e_{xy})|^{2}.
	\end{equation*}
As previously discussed, $\Bigl(\frac{5}{3}\Bigr)^{m}\sum_{x\sim_{m}y}|a_{m}(e_{xy})|^{2}=\|a_{m}\|_{\Hil}^{2}\leq\|a\|_{\Hil}^{2}$, so convergence of 
$\Bigl(\frac{5}{3}\Bigr)^{m}\DF_{m}^{a_{m}}(f)$  is equivalent to convergence of $\Bigl(\frac{5}{3}\Bigr)^{m}\DF_{m}(f)$ and thus to $f\in\domDF$.
\end{proof}

Of course one should expect that $\Bigl(\frac{5}{3}\Bigr)^{m}\DF_{m}^{a_{m}}(f)$  converges to $\DF^{a}(f)$, but we have only proved this under a condition akin to assuming $a\in\Hil$ is locally exact.  Note that in the classical (Euclidean) setting all $1$-forms are locally exact because the space is locally topologically trivial, but this is not the case on fractals.

\begin{defn}\label{defn:exact}
A $1$-form $a\in\Hil$  is called exact if there is $A\in\domDF$ such that $\partial A=a$.  It is locally exact if there is an open cover such that it is exact on the open sets.  Equivalently, it is locally exact if there is a finite partition of $\SG=\cup_{j} X_{w_{j}}$ of SG into cells $X_{w_{j}}=F_{w_{j}}(\SG)$ such that $a$ is exact on each cell, meaning there are $A_{w_{j}}\in\domDF$ so $a\mathds{1}_{w_{j}} =(\partial A_{w_{j}})\mathds{1}_{w_{j}}$ for all $j$.  We say $a$ is exact at scale $m$ if this is the smallest integer for which the  partition can be chosen to consist of $m$-cells.
\end{defn}

It is proved in~\cite{HR} that when $a$ is real-valued and exact there is a Coulomb gauge transformation which conjugates $\DF^{a}$ to $\DF$ and $\Mag^{a}$ to $\Delta$.  Specifically, one has from Corollary~5.6 of~\cite{HR}
\begin{gather}
	 \DF^{a}(f) = \DF(e^{iA}f) \label{eqn:gaugeforDF}\\
	\Mag^{a}f = e^{-iA} \Delta (e^{iA} f)\label{eqn:gaugeforMag}
	\end{gather}
In fact rather more can be obtained from the discussion at the end of Section~5 of~\cite{HR}, using the notion of a Coulomb gauge.

\begin{defn}
Suppose $a\in\Hil$ is real-valued. We say $a$ admits a Coulomb gauge if there is $e^{iA}\in\domDF$ such that $e^{-iA}\partial (e^{iA})=a$, and $a$ admits a local Coulomb gauge if this is true on the cells of a finite partition.
\end{defn}
\begin{rmk}
If $a$ admits a Coulomb gauge then it is locally exact, because $e^{iA}$ is uniformly continuous and thus has a logarithm in $\domDF$ on all sufficiently small cells.  However, having a Coulomb gauge is weaker than (global) exactness because it is possible to have $e^{iA}\in\domDF$ with $A$ locally but not globally in $\domDF$.  To see the distinction, suppose that $a$ is locally exact with $a=\partial A_{j}$ on cells $X_{w_{j}}$.  The $A_{w_{j}}$ are defined up to additive constants, and $a$ is exact if and only if we can choose these constants so $A=A_{w_{j}}$ on $X_{w_{j}}$  is continuous on SG.  By contrast, $a$ has a Coulomb gauge if we can choose the constants so that $e^{iA}=e^{iA_{w_{j}}}$ on $X_{w_{j}}$ is continuous on SG, so in this latter case we may permit jump discontinuities that are integer multiples of $2\pi$ at intersection points of the cells.
\end{rmk}

From Theorem~5.9 of~\cite{HR} both~\eqref{eqn:gaugeforDF} and~\eqref{eqn:gaugeforMag} are valid when $a$ admits a Coulomb gauge.  Note that this Corollary relies on the hypothesis that for connected open sets $U$,  $\partial f\mathds{1}_{U}=0$ implies $f$ is constant on $U$.  This is valid on SG because we can write $U$ as a connected union of cells, whence at any finite scale the cellular decomposition of $\|\cdot\|_{\Hil}$ allows us to assume the restriction of $\partial f$ to each cell is zero. Since each cell is self-similar to SG it suffices to note that if $\DF(f)=\|\partial f\|_{\Hil}^{2}=0$ then $f$ is constant by the properties of resistance forms.

It should be noted that when there is a Coulomb gauge we may immediately write a gauge transformation of $\DF_{m}^{a_{m}}$ and $\Mag_{m}^{a_{m}}$, because in this case the function $e^{iA}\in\domDF$ has an $m$-harmonic approximation (see~(A\ref{Aharmonic} for the definition). The $m$-harmonic approximation has the same values as $e^{iA}$ at points of $V_{m}$, so denoting it with $\bigl(e^{iA}\bigr)_{m}$ we can use to to write
\begin{equation*}
	e^{ia_{m}(e_{xy})} = \bigl( e^{iA(y)}\bigr)_{m} \bigl( e^{-iA(x)}\bigr)_{m} 
	\end{equation*}
for all $x\sim_{m}y$ in $V_{m}$, and therefore
\begin{equation}\label{eqn:exactgaugetransfforDFm}
	\DF_{m}^{a_{m}}(f)
	= \sum_{x,y: x\sim_{m} y} \Bigl| f(x)\bigl( e^{iA(x)}\bigr)_{m} - f(y)\bigl( e^{iA(y)}\bigr)_{m}  \Bigr|^{2}
	= \DF_{m}\bigl(e^{iA}f\bigr),
	\end{equation}
and similarly, for $x\in V_{m}$,
\begin{align}
	\Mag^{a_{m}}_{m} f(x)
	&= -  \bigl(e^{-iA(x)}\bigr)_{m} \sum_{y:y\sim_{m}x} \Bigl( f(x)\bigl(e^{iA(x)}\bigr)_{m}- f(y)\bigl(e^{iA(y)}\bigr)_{m} \Bigr)\\
	&=   e^{-iA}  \Delta_{m}\bigl(e^{iA}f\bigr).
	\end{align}

For forms that admit a local Coulomb gauge our graph magnetic energies converge to the magnetic energy on the fractal.
\begin{thm}\label{thm:DFamconvergestoDFa}
If $a\in\Hil$ is real-valued and has a local Coulomb gauge at scale $n$ then
\begin{equation*}
	\Bigl(\frac{5}{3}\Bigr)^{m}\DF_{m}^{a_{m}}(f)\to\DF^{a}(f) \text{ as } m\to\infty.
	\end{equation*}
\end{thm}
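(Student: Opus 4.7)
The plan is to reduce to genuine local exactness, split $\DF_m^{a_m}$ cell by cell over $N$-cells (for some $N\geq n$), apply the Coulomb gauge identity~\eqref{eqn:exactgaugetransfforDFm} on each cell, and then invoke~(A\ref{ADFislimitofgraphs}). By the remark preceding the theorem, on any sufficiently fine scale $N\geq n$ the uniformly continuous local gauge factor $e^{iA_{w_j}}$ has a continuous logarithm $A_w\in\domDF$ on each $N$-cell $F_w(\SG)$, giving $a\mathds{1}_w=(\partial A_w)\mathds{1}_w$. Both sides of the desired limit are unaffected by this refinement, so I may assume $a$ is locally exact at scale $N$.

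For $m\geq N$ every $m$-edge lies in a unique $N$-cell, so I would split
\begin{equation*}
\DF_m^{a_m}(f) = \sum_{|w|=N}\ \sum_{\substack{x\sim_m y\\ x,y\in F_w(\SG)}} \bigl|f(x) - f(y)e^{i a_m(e_{xy})}\bigr|^2 .
\end{equation*}
The essential identification is that, for each edge inside $F_w(\SG)$,
\begin{equation*}
e^{ia_m(e_{xy})} = e^{i(A_w(y)-A_w(x))} .
\end{equation*}
This is the cellular version of the computation leading to~\eqref{eqn:exactgaugetransfforDFm}: the decomposition $\Hil=\bigoplus_{|w|=N}\Hil_w$ in~(F\ref{Fcelldecomp}) makes $\Tr_m$ act cellwise, and by~(F\ref{FspacesHilm}) its restriction to the exact summand $(\partial A_w)\mathds{1}_w$ is the derivative of the $m$-harmonic approximation of $A_w$, which agrees with $A_w$ on $V_m$. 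Factoring $e^{-iA_w(x)}$ out of each term then collapses the cellwise sum to the unmagnetized graph energy of $e^{iA_w}f$ restricted to edges in $F_w(\SG)$.

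Multiplying by $(5/3)^m$ and using~(F\ref{Fselfsimilar}) to identify the restricted graph Dirichlet form on $F_w(\SG)$ with a standard graph form on $\SG$ composed with $F_w$, property~(A\ref{ADFislimitofgraphs}) gives
\begin{equation*}
\Bigl(\tfrac{5}{3}\Bigr)^m \DF_m^{a_m}(f)\bigr|_{F_w(\SG)} \longrightarrow \DF_w(e^{iA_w}f) \ \text{ as }\ m\to\infty,
\end{equation*}
for every $f\in\domDF$. Summing over $|w|=N$ and invoking the cellwise form of~\eqref{eqn:gaugeforDF}, namely $\DF_w(e^{iA_w}f) = \|(\partial+ia)f\|_{\Hil_w}^2$, together with the orthogonal decomposition $\sum_{|w|=N}\|\cdot\|_{\Hil_w}^2 = \|\cdot\|_\Hil^2 = \DF^a(f)$, produces the target limit. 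For $f\notin\domDF$ both sides are $+\infty$ by the preceding lemma, so the claim holds in general.

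The main obstacle is the cellwise identity $e^{ia_m(e_{xy})} = e^{i(A_w(y)-A_w(x))}$, which bridges the projection-based definition of $a_m$ and the edge differences of a local primitive; the preliminary refinement to honest local exactness is exactly what provides such an $A_w$ (rather than just $e^{iA_w}$) to feed into the $m$-harmonic approximation step. Once this identification is in place the remainder is a routine combination of self-similar rescaling and the graph-energy convergence already recorded in~(A\ref{ADFislimitofgraphs}).
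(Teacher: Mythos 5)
Your proposal is correct and follows essentially the same route as the paper: decompose $\DF_m^{a_m}(f)$ over the cells of the Coulomb-gauge partition, apply the cellwise gauge identity~\eqref{eqn:exactgaugetransfforDFm} to convert each piece into the unmagnetized graph energy of $e^{iA_w}f$ on that cell, and pass to the limit using~(A\ref{ADFislimitofgraphs}) on each of the finitely many cells. The preliminary refinement to an honest logarithm $A_w$ is harmless but not needed, since the identity only ever uses the values of $e^{iA_w}$ on $V_m$.
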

\begin{proof}
By hypothesis we may partition SG as $\cup_{|w|=n}F_{w}(\SG)$ and have  functions $e^{iA_{w}}\in\domDF$ such that
\begin{equation*}
	\DF^{a}(f)=\sum_{|w|=n} \DF_{X_{w}} \bigl(e^{iA_{w}} f|_{X_{w}} \bigr)
	\end{equation*}
where $ \DF_{X_{w}} $ is the Dirichlet form on the cell $X_{w}=F_{w}(\SG)$, so is just a rescaling of the global Dirichlet form.

On each cell the $m$-scale energy $\DF_{m}$ converges to $\DF$, so take $m>n$ sufficiently large that
\begin{equation*}
	 \DF_{X_{w},m} \bigl(e^{iA_{w}} f|_{X_{w}} \bigr) \leq  \DF_{X_{w}} \bigl(e^{iA_{w}} f|_{X_{w}} \bigr) \leq  \frac{\epsilon}{N}+ \DF_{X_{w}, m} \bigl(e^{iA_{w}} f|_{X_{w}} \bigr).
	\end{equation*}
where $N$ is the number of $n$-cells.
Now by~\eqref{eqn:exactgaugetransfforDFm} each of the $\DF_{X_{w},m} \bigl(e^{iA_{w}} f|_{X_{w}} \bigr)$ is that part of the sum for $\DF_{m}^{a_{m}}(f)$ which corresponds to the edges in $X_{w}$, so summing over the finite collection of cells in the truncated sum gives $\DF_{m}^{a_{m}}(f)$ and we have shown it is within $2\epsilon$ of $\DF^{a}(f)$.
\end{proof}
\begin{rmk}
We conjecture that Theorem~\ref{thm:DFamconvergestoDFa} holds without the restriction that $a$ admits a local Coulomb gauge.  Note, however, that we will also need the Coulomb gauge restriction to prove our results on the spectrum of $\Mag^{a}$ in Section~\ref{sec:spectra}, so little is lost by making this assumption here too.
\end{rmk}

\begin{thm}\label{thm:convergeofMagm}
Suppose $a\in\Hil$ is real-valued and has a local Coulomb gauge at scale $n$.  Then $f\in\dom(\Mag^{a})$ if and only if  $\frac{3}{2} 5^{m} \Mag_{m}^{a_{m}} f $ converges uniformly on $V_{\ast}\setminus V_{0}$ to a continuous function $\Phi$.  In this case the continuous extension of $\Phi$ to SG is $\Mag^{a}f$.
\end{thm}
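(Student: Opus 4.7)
The plan is to mirror the strategy of Theorem~\ref{thm:DFamconvergestoDFa}, reducing the claim cell-by-cell via the local Coulomb gauge to the classical statement~(A\ref{Delta_is_graph_limit}) for the ordinary Laplacian. Since the set of junction vertices $V_n\setminus V_0$ of the $n$-partition is finite, it suffices to prove uniform convergence on $V_\ast\setminus V_n$ together with pointwise convergence at each junction.

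On each $n$-cell $X_w$ set $g_w = e^{iA_w}f|_{X_w}$. The cellular analog of~\eqref{eqn:gaugeforMag}, valid because each cell carries a global Coulomb gauge, gives $\Mag^a f = e^{-iA_w}\Delta_{X_w}g_w$ on $X_w^\circ$ (with $\Delta_{X_w}$ the cell Laplacian obtained by pulling back $\Delta$ via $F_w$). The discrete computation preceding~\eqref{eqn:exactgaugetransfforDFm} likewise yields $\Mag^{a_m}_m f(x) = e^{-iA_w(x)}\Delta_m g_w(x)$ whenever $x\in V_m\setminus V_n$ sits in $X_w^\circ$, since then every $m$-neighbor of $x$ belongs to $X_w$. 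Pulling back by $F_w$ puts this in the setting of~(A\ref{Delta_is_graph_limit}) applied to $\tilde g_w = g_w\circ F_w$ on SG, and as there are only finitely many $n$-cells, one obtains the uniform convergence of $\tfrac{3}{2}5^m\Mag^{a_m}_m f$ to $\Mag^a f$ on $V_\ast\setminus V_n$.

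The main obstacle is the junction analysis. At $x\in V_n\setminus V_0$,
\begin{equation*}
\Mag^{a_m}_m f(x) = \sum_{w:\,x\in X_w}e^{-iA_w(x)}\Delta_{m,w}(g_w)(x),
\end{equation*}
where $\Delta_{m,w}$ sums only over $X_w$-neighbors. By~(F\ref{Fselfsimilar}) each partial $\Delta_{m,w}(g_w)(x)$ becomes a graph Laplacian of $\tilde g_w$ at a boundary vertex $p_{j(w)} = F_w^{-1}(x)$ of SG, and the asymptotic in~(A4) gives $\tfrac{3}{2}5^m\Delta_{m,w}(g_w)(x)\sim-\tfrac{3}{2}\,5^n\,3^{m-n}\,d\tilde g_w(p_{j(w)})$. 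These divergent leading terms cancel in the sum precisely when
\begin{equation*}
\sum_{w:\,x\in X_w}e^{-iA_w(x)}\,d\tilde g_w(p_{j(w)}) = 0,
\end{equation*}
and applying Gauss-Green to the cellular decomposition $\DF^a(f,h)=\sum_w\DF_{X_w}(g_w,e^{iA_w}h|_{X_w})$ shows this is exactly the condition that the boundary terms at $x$ vanish in the weak characterization of $\Mag^a f$, i.e.\ that $f\in\dom(\Mag^a)$. Once the leading divergences cancel, a Strichartz-type expansion of the graph Laplacian at a boundary vertex (the $(1/5)^{m-n}$ correction involving the bulk Laplacian $\Delta\tilde g_w(p_{j(w)})$), combined with the fact that SG is finitely ramified so exactly two cells meet at each junction, identifies the finite limit with $\Mag^a f(x)$.

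For the converse, assume $\tfrac{3}{2}5^m\Mag^{a_m}_m f\to\Phi$ uniformly with $\Phi$ continuous. The discrete Gauss-Green identity $\DF_m^{a_m}(f,h)=\langle-\Mag^{a_m}_m f,h\rangle_{\ell^2(V_m)}$ for $h\in\domDF_0$, combined with Theorem~\ref{thm:DFamconvergestoDFa} and a standard Riemann-sum approximation (the natural vertex measure of mass of order $3^{-m}$ at interior vertices of $V_m$ converges weakly to $\mu$), passes to the limit and gives $\DF^a(f,h)=\langle-\Phi,h\rangle_{L^2(\mu)}$ for all $h\in\domDF_0$; hence $f\in\dom(\Mag^a)$ with $\Mag^a f=\Phi$.
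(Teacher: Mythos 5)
Your proposal is correct in outline, but the two directions do not both match the paper. The direction ``uniform convergence $\Rightarrow f\in\dom(\Mag^{a})$'' is essentially the paper's argument: both pass the discrete identity~\eqref{eqn:DFmandMagfm} to the limit using (a polarized form of) Theorem~\ref{thm:DFamconvergestoDFa} together with the fact that the $m$-harmonic tent at an interior vertex has integral $\frac{2}{3}3^{-m}$, which is precisely your weak convergence of vertex measures. The other direction is where you genuinely diverge. The paper avoids all boundary asymptotics: for $x\in V_{m}\setminus V_{0}$ it builds the gauge-twisted tent $\psi_{m}=e^{-iA}\phi_{m}\in\domDF_{0}$ and gets the \emph{exact} identity $3^{m}\langle \Mag^{a}f,\psi_{m}\rangle_{L^{2}(\mu)}=5^{m}\Mag^{a_{m}}_{m}f(x)$ from the $m$-harmonicity of $\phi_{m}$, after which continuity of $\Mag^{a}f$ and $3^{m}\int\psi_{m}\,d\mu\to\frac23$ finish the job with no case distinction between junction and interior vertices. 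Your route instead reduces to~(A\ref{Delta_is_graph_limit}) off $V_{n}$ and performs a two-term asymptotic at the finitely many junctions; this works, but its crux --- the expansion $\frac32 5^{k}\Delta_{k}\tilde g(p)=-\frac32 3^{k}d\tilde g(p)+\frac12\Delta\tilde g(p)+o(1)$ at $p\in V_{0}$ for $\tilde g\in\dom(\Delta)$ --- is not among the facts (A1)--(A6) and you only gesture at it. It is true and provable by local Gauss--Green: if $h$ is harmonic on the $k$-cell $C_{k}$ at $p$ with $h(p)=1$ and $h=0$ at the other two vertices, then $d\tilde g(p)-d_{k}\tilde g(p)=\int_{C_{k}}(\Delta\tilde g)h\,d\mu$ and $\int_{C_{k}}h\,d\mu=\frac13 3^{-k}$, so $3^{k}\bigl(d\tilde g(p)-d_{k}\tilde g(p)\bigr)\to\frac13\Delta\tilde g(p)$; combined with $\Delta_{k}\tilde g(p)=-\bigl(\frac35\bigr)^{k}d_{k}\tilde g(p)$ at a boundary vertex this gives the expansion, and the two half-contributions $\frac12\Mag^{a}f(x)$ from the two cells meeting at $x$ sum to the right limit. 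You should also justify that $e^{iA_{w}}f|_{X_{w}}$ lies in the domain of the cell Laplacian with continuous local Laplacian and existing normal derivatives (localize the weak formulation to test functions supported in the cell interior and apply the gauge identity); the cancellation of the divergent terms at a junction is then exactly the necessity half of Theorem~\ref{thm:gluing}. With these supplied your argument is complete; what the paper's $\psi_{m}$ device buys is that none of this second-order boundary analysis is needed.
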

\begin{proof}
First assume the uniform convergence to a continuous $\Phi$. For any $g\in\domDF$ that vanishes on $V_{0}$ define functions $h_{m}$ which are harmonic at scale $m$ and have values
\begin{equation*}
	h_{m}(x) =  \frac{3}{2} 5^{m} \bigl( \Mag_{m}^{a_{m}}f(x)\bigr) \overline{g(x)} \qquad\text{ for } x\in V_{m}\setminus V_0.
	\end{equation*}
Obviously $h_{m}(x)$ converges uniformly on SG to the continuous extension of $\Phi(x)\overline{g(x)}$.  What is more, the integral of the $m$-harmonic function which is $1$ at $x\in V_{m}\setminus V_{0}$ and zero at all other points of $V_{m}$ is $\frac{2}{3}3^{-m}$  so we may compute
\begin{equation*}
	\int h_{m}(x)\,d\mu = \Bigl(\frac{5}{3}\Bigr)^{m} \sum_{x\in V_{m}}\bigl( \Mag_{m}^{a_{m}}f(x)\bigr) \overline{g(x)} 
	\end{equation*}
Then~\eqref{eqn:DFmandMagfm} says that
\begin{equation*}
	\int h_{m}(x)\,d\mu = -\Bigl(\frac{5}{3}\Bigr)^{m} \DF_{m}^{a_{m}} (f,g)
	\end{equation*}
By Theorem~\ref{thm:DFamconvergestoDFa} and the parallelogram law the right side converges to $-\DF^{a}(f,g)$, and since the left side converges to $\int \Phi\bar{g}\,d\mu=\langle \Phi,g\rangle_{L^{2}(\mu)}$ and $g\in\domDF_{0}$ is arbitrary it must be that $f\in\dom(\Mag^{a})$ with $\Mag^{a}f$ being the continuous extension of $\Phi$ to SG.

Conversely we have $\DF^{a}(f,g)=-\langle \Mag^{a}f,g\rangle_{L^{2}(\mu)}$ for all $g\in\domDF_{0}$ and will make a careful choice of $g$.  Fix $x\in V_{\ast}\setminus V_{0}$ and $m\geq n$.  Since $a$ has a Coulomb gauge at scale $n$ we may find $e^{iA_{w}},e^{iA_{w'}}\in\domDF$ so that $a\mathds{1}_{w}=e^{-iA_{w}}\partial(e^{iA_{w}})\mathds{1}_{w}$ and $a\mathds{1}_{w'}=e^{-iA_{w'}}\partial(e^{i A_{w'}})\mathds{1}_{w'}$, where $F_{w}(\SG)$ and $F_{w'}(\SG)$ are the two $n$-cells that meet at $x$.  However the $e^{iA_{w}}$ and $e^{iA_{w'}}$ are defined only up to multiplicative constants constants of norm $1$, so we can arrange that they join continuously at $x$ and write both as $e^{iA}$.  Now let $\phi_{m}$ be the $m$-harmonic function which is equal $e^{iA(x)}$ at $x$ and zero at all other points of $V_{m}$ and define $\psi_{m}=e^{-iA}\phi_{m}$.  Note that both $\phi_{m}$ and $\psi_{m}$ are identically zero off $F_{w}(\SG)\cup F_{w'}(\SG)$, so the behavior of $A$ off this set does not affect $\psi_{m}$.  Since $\psi_{m}$ is a product of elements of $\domDF$ and is zero at $V_{0}$ it is in $\domDF_{0}$. 
Using this and the fact that $\psi_{m}$ is supported on the set where the gauge transformation is valid
\begin{equation*}
	-\langle \Mag^{a}f,\psi_{m}\rangle_{L^{2}(\mu)}
	=\DF^{a}(f,\psi_{m})
	=\DF(e^{iA}f,e^{iA}\psi_{m})
	= \DF(e^{iA}f,\phi_{m})
	\end{equation*}
but $\phi_{m}$ is $m$-harmonic, so 
\begin{equation*}
	\DF(e^{iA}f,\phi_{m})
	=\bigl(\frac{5}{3}\bigr)^{m}\DF_{m}(e^{iA}f,\phi_{m})
	= \bigl(\frac{5}{3}\bigr)^{m}\DF_{m}^{a_{m}}(f,\psi_{m})
	\end{equation*}
and inserting~\eqref{eqn:DFmandMagfm} we have only the terms involving $x$, so
\begin{equation*}
	3^{m} \langle \Mag^{a}f,\psi_{m}\rangle_{L^{2}(\mu)} = 5^{m} \Mag_{m}^{a_{m}}f(x).
	\end{equation*}
We assumed $\Mag^{a}f$ was continuous, and it is obvious the support of the $\psi_{m}$ converges to $x$, so the proof will be complete if we show $3^{m}\int \psi_{m}\,d\mu\to\frac{2}{3}$.  However $e^{iA}$ is continuous, so its restriction to the support of $\psi_{m}$ converges uniformly to $e^{iA(x)}$ as $m\to\infty$.  If $\chi_{m}$ denotes the  the harmonic function which is $1$ at $x$ and zero on the other points of $V_{m}$ then we conclude $\psi_{m}-\chi_{m}$  converges uniformly to zero.  Moreover $3^{m}\int \chi_{m}\,d\mu=\frac{2}{3}$ for all $m$ by elementary symmetry considerations, so the proof is complete.
\end{proof}

\begin{thm}
Suppose  $a\in\Hil$ is real-valued and admits a local Coulomb gauge at scale $n$. If $f\in\dom(\Mag^{a})$, then the magnetic normal derivative
\begin{equation*}
	d^{a}f(p) = \lim_{m\to\infty} \Bigl( \frac{5}{3}\bigr)^{m} \sum_{x\sim_{m}p} \bigl( f(p)- e^{i(A_{p}(x)-A_{p}(p))} f(x) \bigr)
	\end{equation*}
exists at each $p\in V_{0}$ and for $g\in\domDF$ we have the Gauss-Green formula
\begin{equation}\label{eqn:magGG}
	\DF^{a}(f,g) = -\langle \Mag^{a} f,g \rangle_{L^{2}(\mu)} + \sum_{x\in V_{0}} \bigl(d^{a}f(p) \bigr)\overline {g(p)}.
	\end{equation}
If, in addition, there is $A_{p}\in\domDF$ such that $\partial A_{p}=a$ on a neighborhood of $p$, and the usual normal derivative $d A_{p}(p)$ exists, then $df(p)$ exists and
\begin{equation}\label{eqn:magnormalderivfromusual}
	d^{a}f(p)= e^{-iA_{p}(p)}d \bigl( fe^{iA_{p}} \bigr) = df(p) + if(p)dA_{p}(p)
	\end{equation}
\end{thm}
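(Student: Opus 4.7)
For existence of $d^{a}f(p)$ and the first equality in~\eqref{eqn:magnormalderivfromusual}, fix $p\in V_{0}$ and let $X_{w}=F_{w}(\SG)$ be the unique $n$-cell containing $p$, with local Coulomb gauge function $A_{p}=A_{w}\in\domDF$ so that $a\mathds{1}_{w}=e^{-iA_{p}}\partial(e^{iA_{p}})\mathds{1}_{w}$. For $m\geq n$, every $m$-neighbor $x\sim_{m}p$ lies in $X_{w}$, and by the computation preceding~\eqref{eqn:exactgaugetransfforDFm} one has $e^{ia_{m}(e_{px})}=e^{i(A_{p}(x)-A_{p}(p))}$. Factoring $e^{iA_{p}(p)}$ out of the summand defining $d^{a}f(p)$ shows that sum equals $e^{-iA_{p}(p)}$ times the standard difference quotient for the normal derivative of $e^{iA_{p}}f$ at $p$. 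Since $f\in\dom(\Mag^{a})$ and the cellular form of~\eqref{eqn:gaugeforMag} holds on $X_{w}$, we have $e^{iA_{p}}f\in\dom(\Delta|_{X_{w}})$, so by the localization in~(A4) the standard normal derivative $d^{X_{w}}(e^{iA_{p}}f)(p)$ exists. This simultaneously establishes the existence of $d^{a}f(p)$ and yields $d^{a}f(p)=e^{-iA_{p}(p)}d(fe^{iA_{p}})(p)$.

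For~\eqref{eqn:magGG}, I would repeat the direct manipulation that established~\eqref{eqn:DFmandMagfm} without assuming $g$ vanishes on $V_{0}$, obtaining the extra boundary term
\begin{equation*}
\DF_{m}^{a_{m}}(f,g) = -\sum_{x\in V_{m}\setminus V_{0}}\Mag_{m}^{a_{m}}f(x)\overline{g(x)} + \sum_{p\in V_{0}}\Bigl(\sum_{y\sim_{m}p}\bigl(f(p)-e^{ia_{m}(e_{py})}f(y)\bigr)\Bigr)\overline{g(p)}.
\end{equation*}
Multiplying by $(5/3)^{m}$ and letting $m\to\infty$: the left side tends to $\DF^{a}(f,g)$ by Theorem~\ref{thm:DFamconvergestoDFa}, the interior sum tends to $-\langle\Mag^{a}f,g\rangle_{L^{2}(\mu)}$ by the Riemann-sum argument used in the forward direction of Theorem~\ref{thm:convergeofMagm}, and each boundary bracket tends to $d^{a}f(p)$ by the previous paragraph.

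For the last claim, the hypothesis strengthens to $A_{p}\in\domDF$ with $\partial A_{p}=a$ on a neighborhood of $p$ and $dA_{p}(p)$ existing; the first equality in~\eqref{eqn:magnormalderivfromusual} is already proved. For the second, I decompose
\begin{equation*}
f(p)-f(x) = \bigl(f(p)-e^{i(A_{p}(x)-A_{p}(p))}f(x)\bigr) + f(x)\bigl(e^{i(A_{p}(x)-A_{p}(p))}-1\bigr),
\end{equation*}
expand $e^{i\theta}-1=i\theta+O(\theta^{2})$, and further split $f(x)=f(p)+(f(x)-f(p))$ in the linear term. After multiplying by $(5/3)^{m}$ and summing over $x\sim_{m}p$, the first bracket contributes $d^{a}f(p)$ and the leading linear term contributes $-if(p)dA_{p}(p)$; what remains are a cross term of the form $(5/3)^{m}\sum(f(x)-f(p))(A_{p}(x)-A_{p}(p))$ and a quadratic remainder of order $(5/3)^{m}\sum|f(x)|(A_{p}(x)-A_{p}(p))^{2}$. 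The principal obstacle is the estimate $(5/3)^{m}\sum_{x\sim_{m}p}(A_{p}(x)-A_{p}(p))^{2}\to0$, which I would deduce from self-similarity together with the classical non-atomicity of energy measures on SG: the sum is bounded by $(5/3)^{m}\DF_{0}(A_{p}\circ F_{j}^{\circ m})\leq(5/3)^{m}\DF(A_{p}\circ F_{j}^{\circ m})$, and the right side is precisely the energy of $A_{p}$ on the shrinking cell $F_{j}^{\circ m}(\SG)$, which tends to $0$. Combined with the uniform bound $(5/3)^{m}\sum(f(x)-f(p))^{2}\leq\DF(f)$, this estimate kills the quadratic remainder (via $\|f\|_{\infty}$) and the cross term (via Cauchy-Schwarz), proving existence of $df(p)$ and the identity $d^{a}f(p)=df(p)+if(p)dA_{p}(p)$.
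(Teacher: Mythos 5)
Your proof is correct, but it reaches the first two conclusions by a genuinely different route than the paper. The paper constructs, for each $p\in V_{0}$, a gauge-twisted tent function $\psi_{m}^{p}$ (chosen so that $e^{iA_{p}}\psi_{m}^{p}$ is $m$-harmonic), notes that $\DF^{a}(f,g_{m})$ with $g_{m}=\sum_{p}g(p)\psi_{m}^{p}$ converges because $\DF^{a}(f,g-g_{m})=-\langle\Mag^{a}f,g-g_{m}\rangle_{L^{2}(\mu)}$ does, and then computes $\DF^{a}(f,g_{m})$ to be exactly the boundary sum; existence of $d^{a}f(p)$ and the identity \eqref{eqn:magGG} fall out of one calculation. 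You instead obtain existence up front by localizing the gauge transform to the $n$-cell at $p$, so that $d^{a}f(p)$ is $e^{-iA_{p}(p)}$ times the classical normal derivative of $e^{iA_{p}}f$, and then assemble \eqref{eqn:magGG} from the discrete summation-by-parts identity with boundary terms. Your route buys a conceptually cleaner existence proof (and it establishes the first equality of \eqref{eqn:magnormalderivfromusual} in the general local-Coulomb-gauge setting, which the paper leaves implicit), at the price of leaning on the localization of the operator-level gauge identity: one should spell out that multiplication by the unimodular $e^{iA_{p}}\in\domDF$ is a bijection of the test functions vanishing on the cell boundary, so that $e^{iA_{p}}f$ really lies in the localized Laplacian domain. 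Two small points: in the middle step you need the \emph{converse} direction of Theorem~\ref{thm:convergeofMagm} (that $f\in\dom(\Mag^{a})$ forces uniform convergence of $\tfrac{3}{2}5^{m}\Mag_{m}^{a_{m}}f$) before the Riemann-sum evaluation of the interior term, not the forward direction. For the final identity your decomposition matches the paper's, but your error analysis is actually more robust: the paper asserts $A_{p}(x)-A_{p}(p)=-(3/5)^{m}dA_{p}(p)+o((3/5)^{m})$ for each individual neighbor, which for a general $A_{p}\in\domDF$ with only $dA_{p}(p)$ assumed to exist holds for the sum over the two neighbors but not obviously term by term; your Cauchy--Schwarz treatment of the cross term combined with the decay of energy on the shrinking cells $F_{j}^{\circ m}(\SG)$ closes this gap cleanly.
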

\begin{proof}
Fix $g\in\domDF$.  For each $m$ and each $p\in V_{0}$ use the construction of $\psi_{m}$ from the proof of  Theorem~\ref{thm:convergeofMagm} to obtain a function $\psi_{m}^{p}$ which is $1$ at $p$, zero at all other points of $V_{m}$ and such that if $e^{iA_{p}}$ is the local Coulomb gauge at $p$ then $e^{iA_{p}}\psi_{m}^{p}$ is $m$-harmonic.  Let $g_{m}=\sum_{p\in V_{0}} g(p)\psi_{m}^{p}$.  Then $g-g_{m}\in\domDF_{0}$ and therefore $\DF^{a}(f,g-g_{m})=-\langle \Mag^{a}f,g-g_{m}\rangle_{L^{2}(\mu)}$.  Since $\Mag^{a}f$ is continuous and $g-g_{m}\to g$ in $L^{2}(\mu)$ we find that $\DF^{a}(f,g_{m})$ converges.  For large enough $m$ the gauge transform and the definition of $\psi_{m}^{p}$ imply
\begin{align*}
	\DF^{a}(f,g_{m})
	&=\sum_{p}\overline{g(p)}\DF\bigl(e^{iA_{p}}f,e^{iA_{p}}\psi_{m}^{p}\bigr) \\
	&=\Bigl( \frac{5}{3}\bigr)^{m}\sum_{p}\overline{g(p)}\DF_{m} \bigl( e^{iA_{p}}f,e^{iA_{p}}\psi_{m}^{p}\bigr)\\
	&=\Bigl( \frac{5}{3}\bigr)^{m} \sum_{p}\overline{g(p)}\sum_{x\sim_{m}p} \bigl( f(p)- e^{i(A_{p}(x)-A_{p}(p))} f(x) \bigr)
	\end{align*}
so that the magnetic normal derivative exists and~\eqref{eqn:magGG} holds.

When $dA_{p}$ exists we have $A_{p}(x)-A_{p}(p) = -\Bigl(\frac{3}{5}\Bigr)^{m}dA_{p}(p) + o \Bigl(\frac{3}{5}\Bigr)^{m}$ for both $x\sim_{m}p$.  Thus we compute
\begin{align*}
	df(p)
	&= \lim_{m\to\infty} \Bigl( \frac{5}{3}\bigr)^{m} \sum_{x\sim_{m}p} \bigl( f(p) -  f(x)\bigr) \\
	&=\lim_{m\to\infty} \Bigl( \frac{5}{3}\bigr)^{m} \sum_{x\sim_{m}p} \Bigl(\bigl( f(p) -  f(x)e^{i(A_{p}(x)-A_{p}(p))}\bigr) + f(x)\bigl( e^{i(A_{p}(x)-A_{p}(p))}-1\bigr)\Bigr) \\
	&= d^{a}f(p) - if(p) dA_{p}(p)
	\end{align*}
which gives the second conclusion of the theorem.
\end{proof}

It is apparent that we can localize the magnetic Gauss-Green formula to any cell.  Doing so allows us to give necessary and sufficient conditions for defining a function in $\dom(\Mag^{a})$ piecewise.

\begin{thm}\label{thm:gluing}
Suppose $a\in\Hil$ is real-valued and admits a local Coulomb gauge.  Let $X_{1}=F_{w_{1}}(\SG)$ and $X_{2}=F_{w_{2}}(SG)$ be two cells with $X_{1}\cap X_{2}=\{x\}$ and assume we have functions $f_{j}$ and $u_{j}$ from $\domDF|_{X_{j}}$ such that $\Mag^{a}f_{j}=u_{j}$, $j=1,2$.  In order that the piecewise functions  $f=f_{j}$ on $X_{j}$ and $u=u_{j}$ on $X_{j}$ for $j=1,2$ satisfy $\Mag^{a}f=u$ it is necessary and sufficient that both are  continuous, $f_{1}(x)=f_{2}(x)$ and $u_{1}(x)=u_{2}(x)$, and also that $d^{a} f_{1}(x)+d^{a}f_{2}(x)=0$.
\end{thm}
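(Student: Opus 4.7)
The plan is to reduce the statement to the localized magnetic Gauss-Green formula~\eqref{eqn:magGG} applied separately on $X_1$ and $X_2$, and then compare the resulting cell-wise identities with the Gauss-Green identity one expects on the union $X_1\cup X_2$. The cellular decomposition of $\DF^a$ inherited from~(F\ref{Fcelldecomp}) gives
\begin{equation*}
\DF^a(f,g) = \DF^a_{X_1}(f_1,g|_{X_1}) + \DF^a_{X_2}(f_2,g|_{X_2}),
\end{equation*}
and applying~\eqref{eqn:magGG} on each cell rewrites the right-hand side as
\begin{equation*}
-\langle u,g\rangle_{L^2(\mu)} + \sum_{p\in (F_{w_1}(V_0)\cup F_{w_2}(V_0))\setminus\{x\}} d^a f(p)\,\overline{g(p)} + \bigl(d^a f_1(x)+d^a f_2(x)\bigr)\overline{g(x)}.
\end{equation*}
The last term is the only obstruction to this being the correct Gauss-Green identity for the magnetic Laplacian on $X_1\cup X_2$ with $x$ as an interior point, so both directions of the theorem should follow by exploiting this single displayed formula.

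For necessity, suppose the piecewise $f$ lies in $\dom(\Mag^a)$ with $\Mag^a f=u$. Then $f\in\domDF$ is continuous on SG and so $f_1(x)=f_2(x)$, while $u=\Mag^a f$ is continuous by the definition of $\dom(\Mag^a)$ and so $u_1(x)=u_2(x)$. To extract the normal-derivative condition I would test against a $g\in\domDF_0$ that is supported in $X_1\cup X_2$, equals $1$ at $x$, and vanishes at the other points of $F_{w_1}(V_0)\cup F_{w_2}(V_0)$; such a $g$ is obtained from the $m$-harmonic bump function construction in the proof of Theorem~\ref{thm:convergeofMagm} (the function $\psi_m$ there, localized to $p=x$) for any $m$ at least the depth of $w_1,w_2$. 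Plugging this $g$ into the displayed identity and equating with $-\langle u,g\rangle_{L^2(\mu)}$ forces $d^a f_1(x)+d^a f_2(x)=0$.

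For sufficiency, the hypothesized $f_1(x)=f_2(x)$ glues $f$ continuously so that $f\in\domDF$, while $u_1(x)=u_2(x)$ yields continuity of $u$. The cancellation $d^a f_1(x)+d^a f_2(x)=0$ then kills the extraneous term in the displayed identity, so that for every $g\in\domDF_0$ supported in $X_1\cup X_2$ one obtains $\DF^a(f,g)=-\langle u,g\rangle_{L^2(\mu)}$, which is exactly the weak formulation of $\Mag^a f=u$. The main subtlety I anticipate is bookkeeping around the meaning of $\Mag^a f=u$ on the subset $X_1\cup X_2$ of SG, which I would handle by adopting the localized weak definition and noting that restricting to test functions supported in $X_1\cup X_2$ is precisely what encodes $x$ being an interior rather than a boundary point of the local operator; beyond this the argument is routine, since the Coulomb gauge hypothesis was already used in establishing~\eqref{eqn:magGG} and the cellular structure of $\Hil$ makes the decomposition of $\DF^a$ into cell contributions automatic.
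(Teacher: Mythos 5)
Your proposal is correct and follows essentially the same route as the paper: localize the magnetic Gauss--Green formula~\eqref{eqn:magGG} to each cell, sum the two identities, and compare with the weak (localized) formulation of $\Mag^{a}f=u$ on $X_{1}\cup X_{2}$ for test functions vanishing at the junction points other than $x$, so that the only surviving boundary term is $\bigl(d^{a}f_{1}(x)+d^{a}f_{2}(x)\bigr)\overline{g(x)}$ with $g(x)$ arbitrary. Your explicit $\psi_{m}$-type test function for the necessity direction is just a concrete instance of the paper's remark that ``$g(x)$ can take any value.''
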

\begin{proof}
The role of the continuity assumption is elementary, so we focus on the condition on $d^{a}$.
By localizing~\eqref{eqn:magGG} to $X_{1}$ and $X_{2}$ we may write the hypothesis $\Mag^{a}u_{j}=f_{j}$ as
\begin{equation}\label{eqn:gluingstep}
	\DF^{a}_{X_{j}} (f_{j},g) = -\langle u_{j},g \rangle_{L^{2}(\mu,X_{j})} + \sum_{p\in V_{0}} \bigl(d^{a}f_{j}(F_{w_{j}}(p)) \bigr) \overline{g(F_{w_{j}}(p))}
	\end{equation}
for $j=1,2$.
Similarly, $\Mag^{a}u=f$ on the union means that for functions $g$ which vanish on $\bigl(F_{w_{1}}(V_{0})\cup F_{w_{2}}(V_{0})\bigr)\setminus\{x\}$ we have
\begin{equation*}
	\DF^{a}_{X_{1}\cup X_{2}} (f,g)
	= -\langle u,g \rangle_{L^{2}(\mu,X_{1}\cup X_{2})}.
	\end{equation*}
Comparing this to the sum of~\eqref{eqn:gluingstep} for $j=1,2$ we see that they are the same if and only if all the terms from the sums over $V_0$ vanish.  Our hypothesis on $g$ ensures these sums only contain the two terms at $x$, so the quantity which must vanish is $(d^{a}f_{1}(x)+d^{a}f_{2}(x))\overline{g(x)}$, and $g(x)$ can take any value.
\end{proof}


We conclude this section with a discussion of the structure of the subspace of exact forms on SG and its complementary subspace in $\Hil$.  Recall that the exact forms are the image of the map $\partial:\domDF\to\Hil$.  Since $\|\partial f\|_{\Hil}^{2}=\DF(f)$ and $\domDF$ modulo constants is a Hilbert space, the exact $1$-forms are a complete, hence closed, subspace of $\Hil$.  We write $P$ for the projection onto the exact forms and $P^{\perp}$ for the orthogonal projection. It is proven in~\cite{IRT} that $P\Hil_{m}$ is the space obtained by applying $\partial$ to the $m$-harmonic functions, while $P^{\perp}\Hil_{m}$ is the space of $m$-harmonic $1$-forms.  A $1$-form is $m$-harmonic if on each $m$-cell $X_{w}=F_{w}(SG)$ it is $(\partial h_{w})\mathds{1}{w}$ for some $m$-harmonic function $h_{w}$, and for any point $x\in V_{m}$ the sum of the normal derivatives $\sum_{w} dh_{w}(x)$ over the cells meeting at $x$ is zero.

The self-similarity of the space $\Hil_{m}$ ensures we may understand the structure of $\Hil_{m}$ by studying the structure of $\Hil_{1}$.  This is generated by the harmonic functions modulo constants on the $1$-cells.  It is convenient to incorporate the condition on constants by assuming the harmonic functions have mean zero, so the sum of the values at points $F_{j}(V_{0})$ is zero for each $j\in\{0,1,2\}$.  One can then check that $P\Hil_{1}$ is $5$-dimensional.  In fact, the $5$-dimensional space  generated by $1$-harmonic functions that are mean-zero on SG  can be made mean-zero on each $F_{j}(\SG)$ by subtracting an appropriate mean-zero function that is harmonic on all of SG, so this space decomposes into the $2$-dimensional space $\Hil_{0}=P\Hil_{0}$ and a $3$-dimensional complement. The remaining space, $P^{\perp}\Hil_{1}$ is $1$-dimensional and corresponds to a loop around the central hole. We let $b\in\Hil_{1}$ be the element with counterclockwise orientation shown in  Figure~\ref{fig:loopelement}(a), multiplied by $1/\sqrt{30}$ so that $\|b\|_{\Hil}=1$.  It is also convenient to choose harmonic functions  on the $1$-cells as shown in Figure~\ref{fig:loopelement}(b) such that applying $\partial$ gives $\sqrt{30}b$.  Although this latter is not a function on SG it is a function $B$ on the disjoint union $\sqcup_{j=0,1,2}F_{j}(SG)$.

\begin{figure}[htb]
\begin{picture}(105.6,90)(0,-3)
\setlength{\unitlength}{.23pt} \Spic{2}{32}{0}{0}
\put(72,-40){$-1$}  \put(252,-40){$-1$}
\put(-13,70){$-1$} \put(152,70){$2$} \put(212,70){$2$} \put(345,70){$-1$}
\put(182,126){$2$}
 \put(248,228){$-1$}
\put(82,228){$-1$}
\end{picture}
\ \ \ \ \ \ \ \ 
\begin{picture}(105.6,90)(0,-3)
\setlength{\unitlength}{.20pt}
\Separatedfirstlevel{32}
\put(-18,-40){$0$} \put(442,-40){$0$} \put(212,380){$0$}
\put(185,-40){$1$} \put(220 ,-40){$-1$} 
\put(30,155){$-1$} \put(375,155){$1$}
\put(100,210){$1$} \put(320,210){$-1$}
\end{picture}
\caption{(a) The $1$-form $\sqrt{30}b$, with orientation clockwise around each $1$-cell, hence counterclockwise around the central hole,  and
	 (b) The harmonic function $B$ on disjoint $1$-cells.}\label{fig:loopelement}
\end{figure}
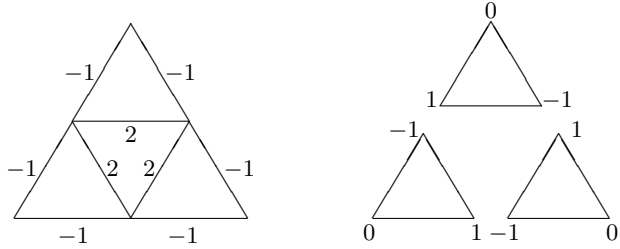

It is apparent that the set $\{b\circ F_{w}\}$ of $1$-forms span the space of harmonic forms $P^{\perp}\Hil$.  If $b\circ F_{w}$ and $b\circ F_{w'}$  are from disjoint cells then the direct sum decomposition in~(F\ref{Fcelldecomp}) implies they are orthogonal,  and by computing $\Tr_{0}b=0$ from the formula in Definition~\ref{defn:trace} we find $b\circ F_{w}$ and $b\circ F_{w'}$  are orthogonal if $|w|\neq|w'|$. Thus $\{b\circ F_{w}\}$ is an orthogonal basis for $P^{\perp}\Hil$, and for real values $\beta_{w}$
\begin{equation}\label{eqn:normofharmonicform}
	\Bigl\| \sum_{m=1}^{\infty}\sum_{|w|=m} \beta_{w}b\circ F_{w} \Bigr\|_{\Hil}^{2} = \sum_{m=1}^{\infty}\Bigl( \frac{5}{3}\Bigr)^{m}\sum_{|w|=m} \beta_{w}^{2}
	\end{equation} 
if the latter series converges.  Moreover, $a\in\Hil$ is locally exact if and only if $P^{\perp}a$ is a series of this type with only finitely many terms, it has Coulomb gauge if and only if all $\beta_{w}$ in the series for $P^{\perp}a$ are integer multiples of $2\pi$, and it has scale $n$ Coulomb gauge if and only if all $\beta_{w}$ in the series for $P^{\perp}a$ which have $|w|>n$ are integer multiples of $2\pi$.  Note that the final point and~\eqref{eqn:normofharmonicform} gives another proof that every form admiting a local Coulomb gauge is locally exact, though not necessarily at the same scale.


\section{Spectra of magnetic operators with local Coulomb gauge}\label{sec:spectra}

In this section we study the spectrum of Dirichlet magnetic operators $\Mag^{a}$, which we know from Theorem~\ref{thm:mainresultofHR}  is pure point.   Our approach relies heavily on the spectral decimation property of the Laplacian on SG~\cite{RammalToulouse,Shima,FukushimaShima} and associated properties of the eigenfunctions~\cite{DSV,Kig1998}.  Spectral decimation says that if $f$ is an eigenfunction of $\Delta$ on SG then there is $m_{0}$ (called the generation of birth) and a sequence $\{\lambda_{m}\}_{m_{0}}^{\infty}$ such that $\Delta_{m}f=\lambda_{m}f$ for all $m\geq m_{0}$.  The sequence $\{\lambda_{m}\}$ is related to the eigenvalue by $\lambda_{m}(5-\lambda_{m})=\lambda_{m-1}$ and $\frac{3}{2}\lim 5^{m}\lambda_{m}=\lambda$.  One  way to view this graph eigenfunction equation is as follows:  if on each $m$-cell $F_{w}(\SG)$ we have $f_{w}$ such that $\Delta f_{w}=\lambda f_{w}$ then defining $f$ piecewise to be $f_{w}$ on  $F_{w}(\SG)$  we have $\Delta f=\lambda f$ if and only if $f$ is continuous and $\Delta_{m}f=\lambda_{m}f$.  Comparing this to the usual gluing property we see that the discrete eigenfunction equation encodes that the normal derivatives sum to zero at the points of $V_{m}$.  The equivalence of these conditions may also be verified using the explicit formulas for the normal derivatives from~\cite{DRS}.

We wish to study the spectrum of $\Mag^{a}$ via the finite approximations $\Mag^{a_{m}}_{m}$, so in light of the results of the previous section it makes sense to only consider real-valued $a\in\Hil$  which admit a local Coulomb gauge at scale $n$.   By the discussion following Definition~\ref{defn:exact} we may also assume that $Pa=0$, because we can gauge transform to remove this part of $a$.  Doing so will not change the eigenvalues of $\Mag^{a}$ and will simply conjugate the eigenfunctions.   Under these assumptions let $m\geq n$ and $e^{iA_{w}}$ be the gauge transform on the $m$-cell $F_{w}(\SG)$.  Then $u_{w}$ satisfies $\Mag^{a}u_{w}=\lambda u_{w}$ on $F_{w}(\SG)$ if and only if $f_{w}=e^{iA_{w}}u_{w}$ and $\Delta f_{w}=\lambda f_{w}$ on the cell.  The condition for gluing the $u_{w}$ into a piecewise defined eigenfunction with $\Mag^{a}u=\lambda u$  is that they join continuously and $\sum_{w}d^{a}u_{w}(p)=0$,  where the sum is over the cells meeting at $p\in V_{m}$.  From $Pa=0$ we have $\sum_{w}dA_{w}(p)=0$ for all $p$, so by~\eqref{eqn:magnormalderivfromusual} our condition becomes $\sum_{w}e^{-iA_{w}(p)}df_{w}(p)=\sum_{w}du_{w}=\sum_{w}d^{a}u_{w}=0$.  But $\Delta e^{-iA_{w}(p)}f_{w}(x)=\lambda e^{-iA_{w}(p)}f_{w}(x)$,  so the normal derivatives sum to zero at $p$ if and only if $\Delta_{m}e^{-iA_{w}(p)}f_{w}(p)=\lambda_{m}e^{-iA_{w}(p)}f_{w}(p)$, which is precisely $\Mag_{m}^{a_{m}}u_{w}=\lambda_{m} u_{w}$.  Thus we can study $\Mag^{a}u=\lambda u$ by examining $\Mag_{m}^{a_{m}}u=\lambda_{m}u$ for $m\geq n$.

As described at the end of the previous section, the assumptions we have on $a$ imply that there are real numbers $\beta_{w}$ with $\beta_{w}\in2\pi\mathbb{Z}$ for $|w|>n$, such that 
\begin{gather}
	a=\sum_{m=1}^{\infty}\sum_{|w|=m} \beta_{w}b\circ F_{w}, \label{eqn:nonexactmag}\\
	\|a\|_{\Hil}^{2}=\sum_{m=1}^{\infty}\Bigl( \frac{5}{3}\Bigr)^{m}\sum_{|w|=m} \beta_{w}^{2}<\infty. \notag
	\end{gather}
Since all terms in this expression are self-similar it is clear that a significant step is to understand the spectrum of $\Mag^{\beta b}$, in which case we can look at $\Mag_{1}^{\beta b}$.

\begin{figure}
[t]
\includegraphics[width=.65\textwidth]{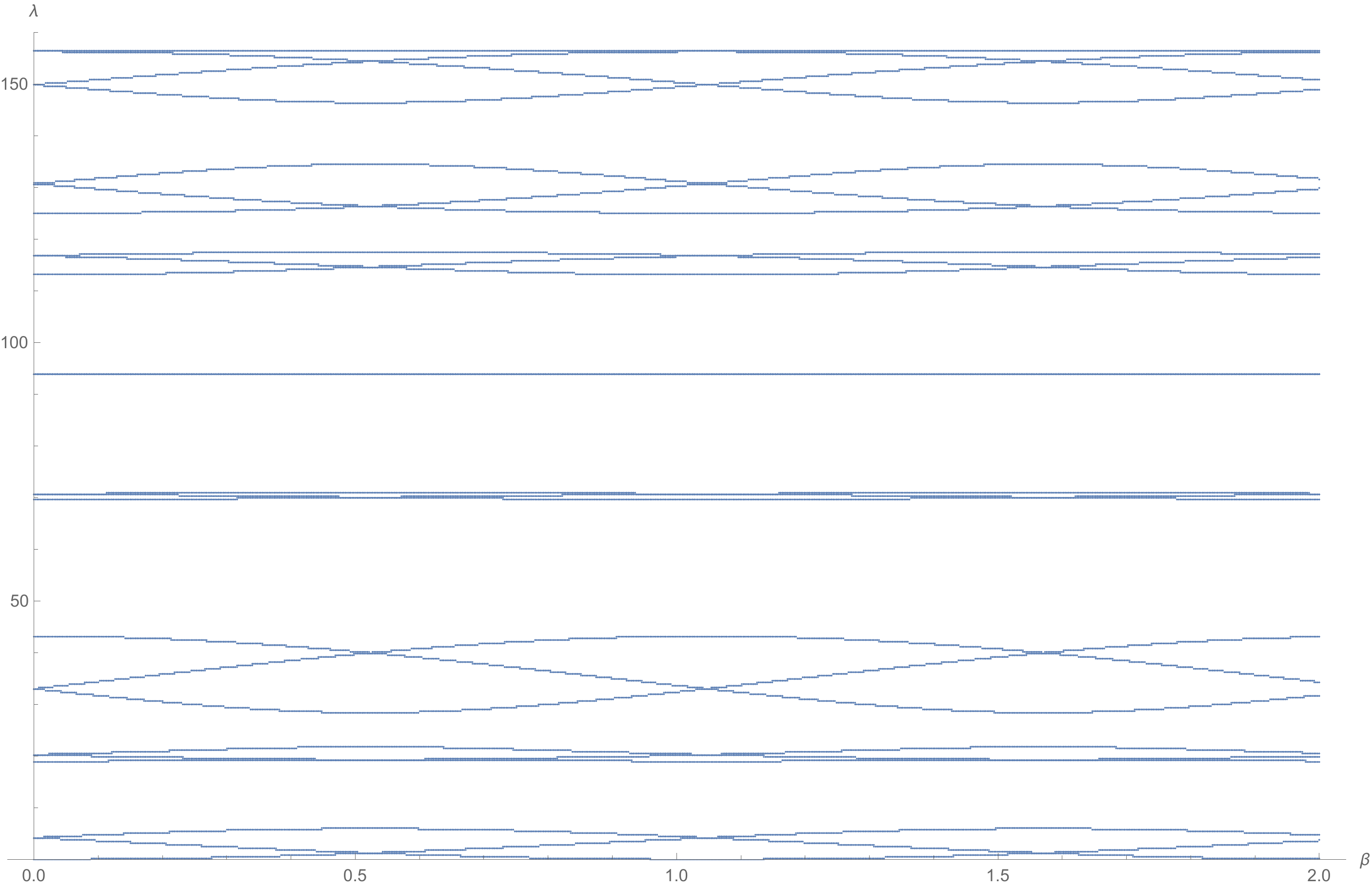}
\\[12pt]
\includegraphics[width=.65\textwidth]{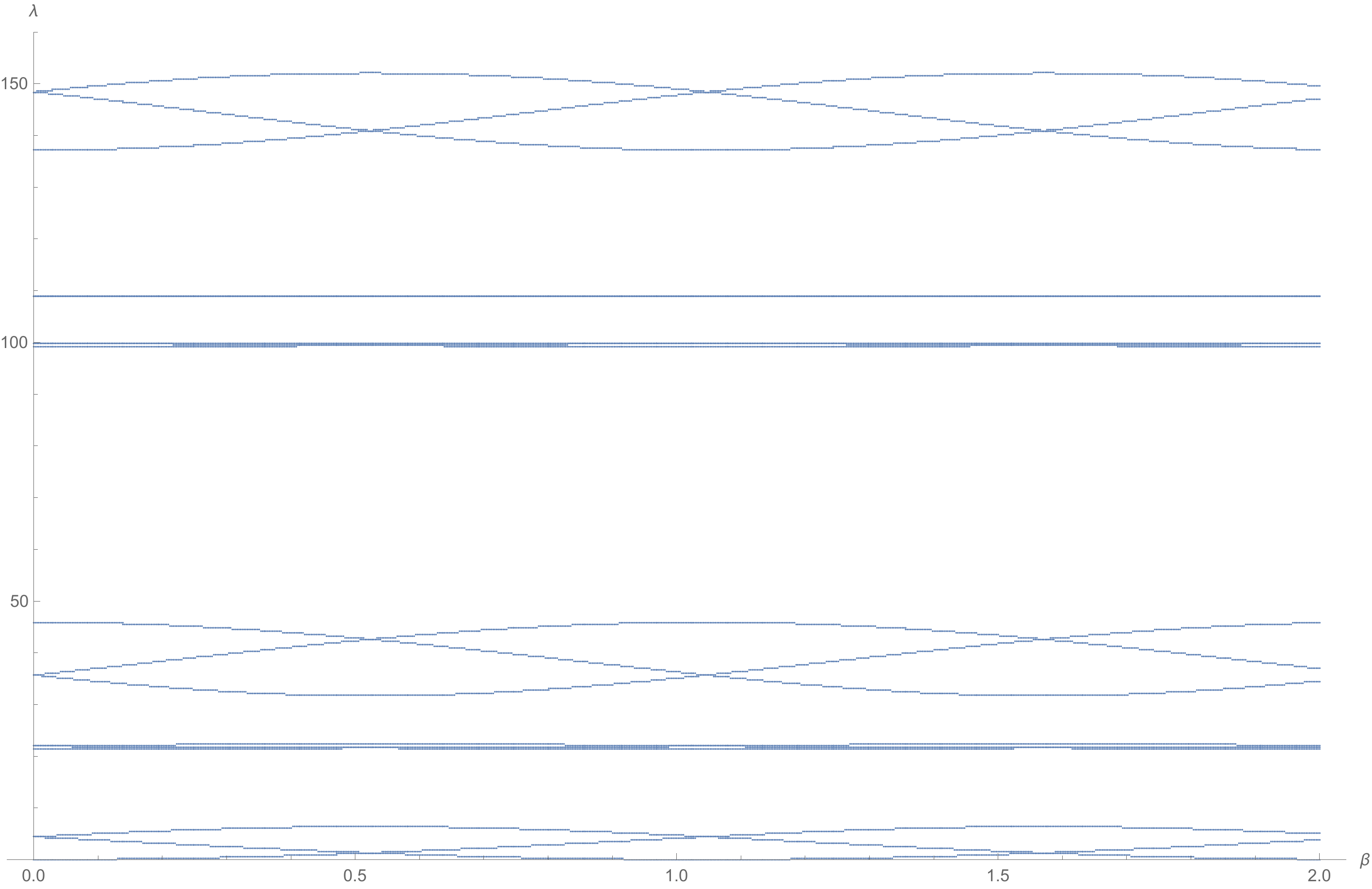}
\\[12pt]
\includegraphics[width=.65\textwidth]{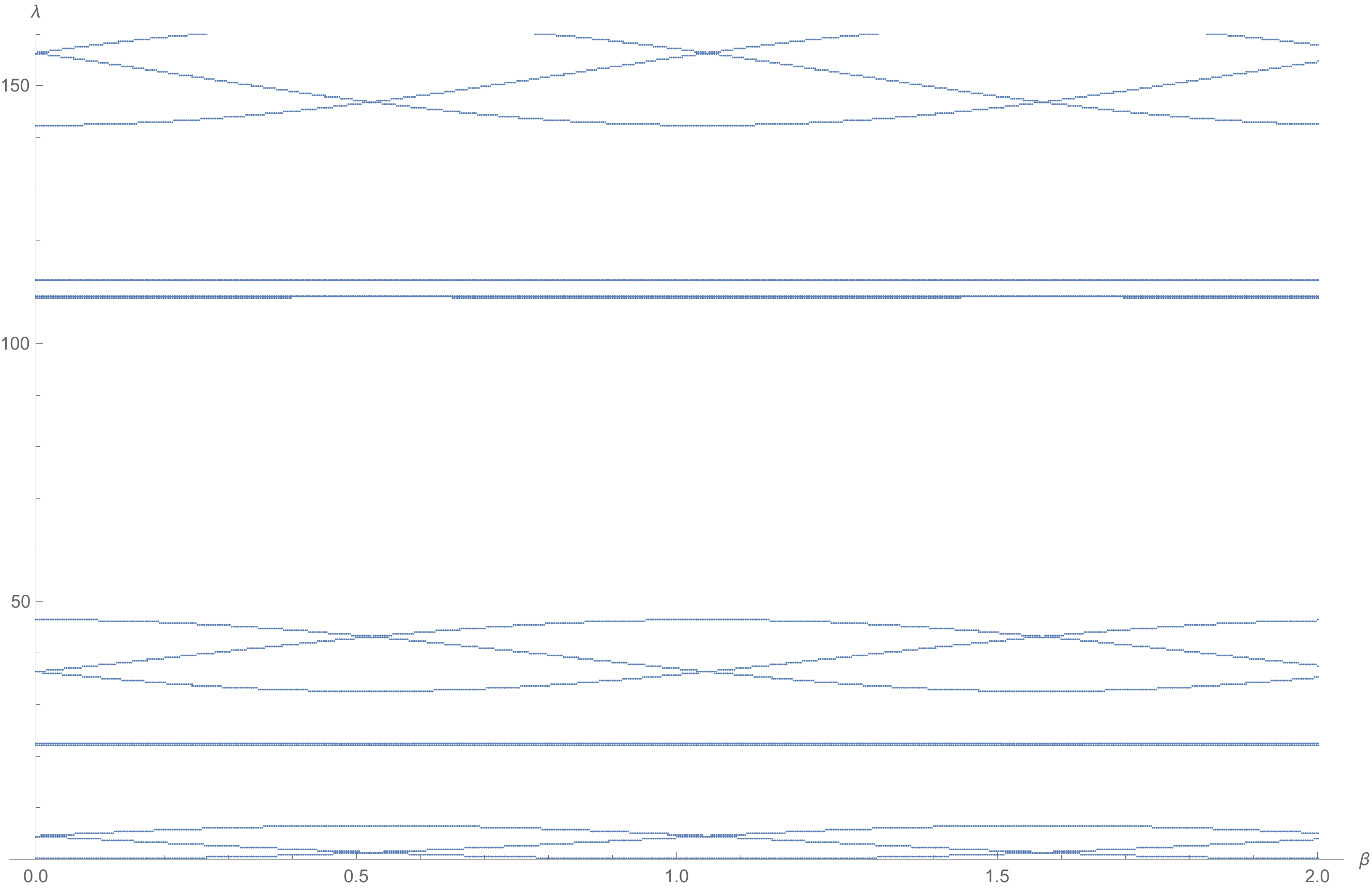}
\caption{Eigenvalues less than $160$ and $0\leq \beta\leq 2$ for the (from top to bottom) 4th, 5th, and 6th level approximation to $\mathcal M^{\beta b}$ } \label{SpectrumGraphs}
\end{figure}


The results of some numerical investigations into the spectrum of $\Mag^{b}$ are shown in Figure~\ref{SpectrumGraphs}. One can see the structure of the spectrum inherited from the spectral decimation process, which copies and expands the spectrum with each level of approximation.

Of particular note is the existence of many eigenvalues that do not vary with $\beta$, and are therefore independent of the field. These can be seen in Figure \ref{SpectrumGraphs} as horizontal lines.  This pattern persists for more complicated magnetic operators $\Mag^{a}$ with local Coulomb gauge: when $m$ is sufficiently large we find that  $\Mag_{m}^{a_{m}}$ has a large number of eigenvalues that are the same as those of $\Delta_{m}$.  This turns out to be a straightforward consequence of the structure of the eigenfunctions of the Laplacian.

\begin{thm}\label{thm:conjugateLapefns}
 Suppose $f$ is an eigenfunction of $\Delta$ with eigenvalue $\lambda$ and the support of  $f$ is a finite union of cells $\cup X_{k}$ on which $a$ has a Coulomb gauge, so there is $e^{iA}\in\domDF$ such that  $e^{-iA}(\partial e^{iA})\mathds{1}_{\cup X_{k}}=a\mathds{1}_{\cup X_{k}}$.  Then $fe^{-iA}$ is an eigenfunction of $\Mag^{a}$ with eigenvalue $\lambda$.
\end{thm}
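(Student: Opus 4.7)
The plan is to apply the gluing theorem (Theorem~\ref{thm:gluing}) to the candidate eigenfunction $u=fe^{-iA}$, treating its restriction to each cell of the covering $\cup X_k$ as a local $\Mag^{a}$-eigenfunction obtained from $f$ by the Coulomb gauge, and extending $u$ by zero outside $\cup X_k$. The key ingredient will be the identity~\eqref{eqn:gaugeforMag}, which, applied locally on each $X_k$, gives $\Mag^{a}(fe^{-iA})=e^{-iA}\Delta(e^{iA}\cdot fe^{-iA})=e^{-iA}\Delta f=\lambda\,fe^{-iA}$, so that $u$ is a local $\Mag^{a}$-eigenfunction on every $X_k$ with eigenvalue $\lambda$. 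On any cell disjoint from $\cup X_k$ we have $u\equiv 0$ trivially satisfying the eigenvalue equation.

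The next step is to verify the hypotheses of Theorem~\ref{thm:gluing}, applied inductively across the finitely many pairs of adjacent cells where the piecewise definition might fail. Continuity of $u$ is immediate in the interior of $\cup X_k$ because $e^{iA}\in\domDF$ is continuous there and $f$ is continuous as a $\Delta$-eigenfunction. Across the boundary $\partial(\cup X_k)$, $f$ must vanish (any $f\in\dom(\Delta)$ is continuous on SG, and $f=0$ identically on the complement), hence $u=fe^{-iA}=0$ on both sides. Continuity of $\Mag^{a}u=\lambda u$ follows from continuity of $u$.

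The one remaining condition is that the magnetic normal derivatives $d^{a}u_w$ sum to zero at every junction point $p$ where two or more cells of our piecewise decomposition meet. Here I use~\eqref{eqn:magnormalderivfromusual}: for cells $X_w\subset\cup X_k$ the local primitive $A_p$ agrees (up to an integer multiple of $2\pi$, which is immaterial for $e^{iA}$) with the global $A$ near $p$, so $u_w e^{iA_p}=f_w$ and therefore $d^{a}u_w(p)=e^{-iA(p)}df_w(p)$. For cells $X_w$ disjoint from $\cup X_k$ we have $u_w\equiv0$ and hence $d^{a}u_w(p)=0$. Factoring out the single value $e^{-iA(p)}$,
\begin{equation*}
\sum_{w}d^{a}u_w(p)=e^{-iA(p)}\sum_{w:X_w\subset\cup X_k}df_w(p)=e^{-iA(p)}\sum_{w}df_w(p)=0,
\end{equation*}
where the last equality is the matching of normal derivatives for the global $\Delta$-eigenfunction $f\in\dom(\Delta)$ (the omitted terms vanish because $f\equiv 0$ there). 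Applying Theorem~\ref{thm:gluing} cell-by-cell then yields $u\in\dom(\Mag^{a})$ with $\Mag^{a}u=\lambda u$.

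The only delicate point is the bookkeeping around $\partial(\cup X_k)$: I need that the single function $A$ on $\cup X_k$ really does act as the local Coulomb gauge $A_p$ needed in~\eqref{eqn:magnormalderivfromusual} at each such boundary point, which is guaranteed by the hypothesis that $e^{-iA}\partial(e^{iA})\mathds{1}_{\cup X_k}=a\mathds{1}_{\cup X_k}$ together with the freedom to extend $A_p$ arbitrarily on cells where $u_w\equiv 0$ (since those contribute nothing to the sum). Everything else is a direct application of the formulas from Section~\ref{sec:magnetic}.
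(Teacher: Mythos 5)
Your proof is correct in substance, but it takes a genuinely different route from the paper's. The paper proves this theorem in two lines via the weak formulation: for $g\in\domDF_{0}$ one computes $\DF^{a}(fe^{-iA},g)=\DF(f,e^{iA}g)=-\lambda\langle f,e^{iA}g\rangle=-\lambda\langle fe^{-iA},g\rangle$, using the bilinear gauge identity on the support of $f$ together with the facts that $e^{iA}\in\domDF$ globally and that multiplication by $e^{iA}$ preserves $\domDF_{0}$. Your argument instead goes through the local statement $\Mag^{a}(fe^{-iA})=e^{-iA}\Delta f$ on each $X_{k}$, the vanishing of $f$ and $df$ on $\partial(\cup X_{k})$, the normal-derivative identity~\eqref{eqn:magnormalderivfromusual}, and the gluing criterion of Theorem~\ref{thm:gluing}. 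This is precisely the alternative the authors sketch in the remark immediately following their proof, so it is a legitimate and more structurally illuminating route: it explains \emph{why} the extension by zero is a genuine solution (matching of magnetic normal derivatives), rather than just verifying the weak identity. What the paper's approach buys is economy and weaker hypotheses at each step: it never invokes normal derivatives or gluing, so it does not need the standing assumption of Theorem~\ref{thm:gluing} that $a$ admits a local Coulomb gauge on all of SG (only on $\cup X_{k}$, as the theorem hypothesizes). Your route technically leans on that stronger hypothesis when you apply the gluing theorem and~\eqref{eqn:magnormalderivfromusual} at junction points adjacent to cells outside $\cup X_{k}$; you do flag this and correctly observe that those cells contribute nothing since $u\equiv 0$ there (one can also note that the localized Gauss--Green terms for the zero function vanish without any gauge), so the gap is only one of bookkeeping, not of substance.
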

\begin{proof}
This is a direct computation from the validity of the gauge transformation on $\cup X_{k}$, because for $g\in\domDF_{0}$
\begin{equation*}
	\DF^{a}(fe^{-iA},g)=\DF(f,e^{iA}g)
	=-\lambda\langle f, e^{iA}g\rangle
	=-\lambda\langle fe^{-iA},g\rangle. \qedhere
	\end{equation*}
\end{proof}
\begin{rmk}
This result can also be thought of in terms of the gluing result in Theorem~\ref{thm:gluing}.  By construction $fe^{-iA}$ satisfies the eigenfunction equation for $\Mag^{a}$ on $\cup X_{k}$.  From the fact that $f$ is identically zero outside $\cup X_{k}$ we see that $df$ must be zero at the boundary points of $\cup X_{k}$.  Using~\eqref{eqn:magnormalderivfromusual} with $f=df=0$ on the boundary of $\cup X_{k}$ we have $fe^{-iA}=d^{a}(fe^{-iA})=0$ there also, so extending $fe^{-iA}$ by zero gives a smooth solution of the eigenfunction equation on SG.
\end{rmk}

In order to see why this result determines many eigenfunctions of $\Mag^{a}$ we need some more consequences of the spectral decimation method, particularly those from~\cite{DSV,Kig1998}. Our presentation of them follows the elementary exposition in~\cite{Strichartzbook}, except that our bases for the $5$-series eigenspaces are more like those in~\cite{DSV}.  In order to describe these bases we define a chain of $m$-cells to be a sequence $X_{k}=F_{w_{k}}(\SG)$, $k=1,\dotsc, K$ such that $|w_{k}|=m$  for all $k$ and $X_{k}\cap X_{k+1}=\{x_{k}\}$ is a sequence of $K-1$ distinct points from $V_{m}$. We say the chain is simple if $X_{k}\cap X_{k'}=\emptyset$ unless $|k-k'|\leq1$.

\begin{enumerate}[({S}1)]
\item\label{S:specdec} For a Dirichlet eigenvalue $\lambda$ of $\Delta$ on SG with eigenfunction $f$, let $m(\lambda)$ be its generation of birth and $\lambda_{m}$ be the spectral  decimation sequence, so  $\Delta_{m}f=\lambda_{m}f$, $\lambda_{m}(5-\lambda_{m})=\lambda_{m-1}$ and $\frac{3}{2}\lim 5^{m}\lambda_{m}=\lambda$.  Then $\lambda_{m(\lambda_{0})}\in\{2,5,6\}$ and $\lambda_{m}\not\in\{2,5,6\}$ for $m>m(\lambda)$.  We let $\sigma_{s}=\{\lambda: \lambda_{m(\lambda_{0})}=s\}$ for $s=2,5,6$, and call these the $2$, $5$, and $6$ series eigenfunctions.
\item\label{S:fixation} From the preceding, $\lambda_{m}=\frac{1}{2}\bigl(5\pm\sqrt{25-4\lambda_{m-1}} \bigr)=\Phi_{\pm}(\lambda_{m-1})$.  For convergence of $5^{m}\lambda_{m}$ the positive root can occur at most finitely often, so there is $m_{1}(\lambda)$ called the generation of fixation such that $\lambda_{m}=\Phi_{-}(\lambda_{m-1})$ for all $m>m_{1}$.  Writing $\Phi_{-}^{\circ m}$ for the $m$-fold composition, the function $\mathcal{R}(\tau)=\lim_{m} 5^{m}\Phi_{-}^{\circ m}(\tau)$ is analytic, $\mathcal{R}(0)=0$ and $\mathcal{R}'(0)\neq0$.  Knowing the generation of fixation the eigenvalue is $\lambda=5^{m_{1}}\mathcal{R}(\lambda_{m_{1}})$.
\item  If $\lambda\in\sigma_{2}$ then $m(\lambda)=1$, its eigenspace is $1$-dimensional, and the eigenfunctions are fully symmetric under the dihedral symmetry group of the triangle.  
\item If $\lambda\in\sigma_{5}$ then $m(\lambda)\geq1$.  All eigenfunctions vanish on $V_{m(\lambda)-1}$ and the eigenspace has dimension $\frac{1}{2}\bigl(3^{m(\lambda)-1}+3\bigr)$.  There is a basis for the $5$-series eigenfunctions in which each is supported on a simple chain of $(m(\lambda)-1)$-cells in which $X_{k_{1}}$ and $X_{k_{K}}$ contain distinct points of $V_{0}$.
\item If $\lambda\in\sigma_{6}$ then  $m(\lambda)\geq2$.  The eigenspace has dimension $\frac{1}{2}\bigl(3^{m(\lambda)}-3\bigr)$, and there is a basis in which each eigenfunction is supported on the union of two $(m(\lambda)-1)$-cells meeting at a point of $V_{m(\lambda)-1}\setminus V_{0}$.
\end{enumerate}

A small comment about the $5$-series basis is in order.  With generation of birth $m+1$ there is a function supported on an $m$-cell with the following property: given an $m$-chain with both ends on $V_{0}$ there is an arrangement of copies of the function along the cells in the chain such that the  
resulting function extends smoothly  by $0$ to give a $5$-series eigenfunction on SG.  This arrangement is unique up to multiplying the eigenfunction by a scalar.  In~\cite{DSV} a basis is given in which each eigenfunction is supported on an $m$-cell chain from $p_{0}$ to either $p_{1}$ or $p_{2}$, but the chains given are not simple. In particular it follows from~\cite{DSV} that the number of  $m$-cell chains between two points of $V_{0}$ is $\frac{1}{2}\bigl(3^{m-1}+1\bigr)$.  Observe that each simple $m$-chain determines an $(m-1)$-chain by taking the parent cells of the $m$-cells in the chain. Conversely an $(m-1)$-chain determines a simple $m$-chain by taking, in each $(m-1)$-cell $X_{k}$, the two $m$-cells which form the shortest $m$-cell chain from $x_{k-1}$ to $x_{k}$. From this bijection between simple $m$-chains and $(m-1)$-chains we see that the number of simple $m$-cell chains between two specified points of $V_{0}$ is $\frac{1}{2}\bigl(3^{m-2}+1\bigr)$, and therefore the number of such chains joining pairs of points from $V_{0}$ is $\frac{1}{2}\bigl(3^{m-1}+3\bigr)$, which is the dimension of a $5$-series eigenspace with generation of birth $m$.  Moreover it is easy to prove inductively that the eigenfunctions corresponding to these chains are linearly independent.  When $m=2$ this can be done by hand (as was done in~\cite{DSV}).  For the inductive step observe that if a linear combination of eigenfunctions corresponding to simple $m$-chains is zero then it is zero on each cell $F_{j}(X)$, $j=0,1,2$.  Then precomposing the piece on $F_{j}(X)$ with $F_{j}^{-1}$ gives a vanishing linear combination of eigenfunctions corresponding to $(m-1)$-chains, and these are linearly independent by the inductive hypothesis.

\begin{thm}\label{thm:spectrumofMaga}
If $a$ is a real-valued form with local Coulomb gauge at scale $n$  and $\lambda$ is a Laplacian eigenvalue with generation of birth $m(\lambda)>n$ then $\lambda$ is also an eigenvalue of $\Mag^{a}$, and the corresponding eigenfunction is obtained from the Laplacian eigenfunction by a gauge transformation. 
\end{thm}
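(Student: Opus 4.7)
The plan is to exhibit, for each such $\lambda$, a nonzero eigenfunction of $\Mag^{a}$ with eigenvalue $\lambda$ by applying Theorem~\ref{thm:conjugateLapefns} to a Laplacian eigenfunction drawn from the spectral decimation bases (S3)--(S5). The key observation is that when $m(\lambda)>n$ the basis eigenfunctions are supported on unions of cells of scale $m(\lambda)-1\ge n$, and such cells refine the scale-$n$ Coulomb gauge partition of $a$, so a Coulomb gauge for $a$ can be constructed on the support of each basis eigenfunction.

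First I would reduce to the cases $\lambda\in\sigma_{5}\cup\sigma_{6}$. For $\lambda\in\sigma_{2}$ we have $m(\lambda)=1$, and the hypothesis $m(\lambda)>n$ forces $n=0$; in that case $a$ admits a Coulomb gauge on all of SG and Theorem~\ref{thm:conjugateLapefns} applies directly with $\cup X_{k}=\SG$. In the remaining cases, (S4) and (S5) supply a basis of the $\lambda$-eigenspace of $\Delta$ consisting of functions $f$ each supported either on a simple chain $X_{1},\dotsc,X_{K}$ of $(m(\lambda)-1)$-cells, with $K-1$ distinct junction points $x_{k}=X_{k}\cap X_{k+1}$ (the $\sigma_{5}$ case), or on the union of two $(m(\lambda)-1)$-cells meeting at a single vertex (the $\sigma_{6}$ case).

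The main step is to manufacture a Coulomb gauge $e^{iA}$ for $a$ on $\bigcup_{k}X_{k}$. Because $m(\lambda)-1\geq n$, each $X_{k}$ is contained in a unique $n$-cell of the Coulomb gauge partition, so restricting the ambient gauge yields $e^{iA_{k}}\in\domDF|_{X_{k}}$ with $e^{-iA_{k}}\partial(e^{iA_{k}})\mathds{1}_{X_{k}}=a\mathds{1}_{X_{k}}$. Since each $e^{iA_{k}}$ is determined only up to multiplication by a unimodular constant, I would march along the chain (or take the two cells in turn in the $\sigma_{6}$ case) and inductively choose those constants to enforce $e^{iA_{k}}(x_{k})=e^{iA_{k+1}}(x_{k})$ at every junction. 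This produces a continuous function $e^{iA}$ on $\bigcup_{k}X_{k}$, which I would extend to an element of $\domDF$ on all of SG by any continuous extension (for instance, harmonic extension on the remaining $m$-cells). By construction $e^{-iA}\partial(e^{iA})\mathds{1}_{\cup_{k}X_{k}}=a\mathds{1}_{\cup_{k}X_{k}}$, so Theorem~\ref{thm:conjugateLapefns} applied to $f$ and this $e^{iA}$ produces the eigenfunction $fe^{-iA}$ of $\Mag^{a}$ with eigenvalue $\lambda$.

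The anticipated obstacle is purely bookkeeping: one must check that the piecewise-defined $e^{iA}$ really lies in $\domDF$, which follows from (A1) combined with the imposed continuity at the junctions, and that the sequential choice of unimodular constants cannot be obstructed by a compatibility loop, which is automatic because the adjacency graph of the support is a path (in the $\sigma_{5}$ case) or a single edge (in the $\sigma_{6}$ case), hence a tree. Both points are routine once the reduction to the explicit bases (S4)--(S5) has been made, so the whole argument is essentially a packaging of Theorem~\ref{thm:conjugateLapefns} together with the spectral decimation support structure.
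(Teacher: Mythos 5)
Your proposal is correct and follows essentially the same route as the paper: both arguments exploit the simple-chain (or two-cell) support structure of the basis eigenfunctions from (S4)--(S5), patch the local scale-$n$ Coulomb gauges into a single continuous gauge on the support by adjusting the free multiplicative constants along the chain (possible precisely because the adjacency structure is a path, hence loop-free), and then invoke Theorem~\ref{thm:conjugateLapefns}. Your explicit treatment of the $\sigma_{2}$ case and of the extension of $e^{iA}$ to an element of $\domDF$ on all of SG merely fills in details the paper leaves implicit.
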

\begin{proof}
If $a$ is as described then on every $n$-cell $F_{w}(\SG)$ we have a gauge function $e^{iA_{w}}$, which is determined up to a multiplicative constant.  For $\lambda$ as described the Laplacian eigenfunction is supported either on simple chain of $(m(\lambda)-1)\geq n$ cells, or on the union of two $(m(\lambda)-1)$ cells, which we denote $X_{k}=F_{w_{k}}(\SG)$.  In either case simplicity of the chain ensures we may choose the values $e^{iA_{w}(x_{k})}$, where $x_{k}=X_{k}\cap X_{k+1}$, so that $e^{iA}=e^{iA_{w_{k}}}$ on $X_{k}$ is continuous, hence a Coulomb gauge on $\cup X_{k}$.  The result then follows from Theorem~\ref{thm:conjugateLapefns}.
\end{proof}

\begin{cor}\label{cor:spectasymptotics}
If $a$ is a real-valued form with local Coulomb gauge at scale $n$ then $\Mag^{a}$ has the same spectral asymptotics as $\Delta$.  Specifically, let $\rho^{a}(x)$ be the counting function of $\Mag^{a}$, so $\rho^{a}(x)=\#\{\lambda\in\sigma_{D}:\lambda\leq x\}$. There is a non-constant periodic function $\chi$ of period $\log 5$ such that 
\begin{equation*}
	\lim_{x\to\infty} \rho^{a}(x)x^{-\log3/\log5} - \chi(\log x) = 0.
	\end{equation*}
The function $\chi$ is independent of $a$, so is the same as that occuring for the Laplacian spectrum.
\end{cor}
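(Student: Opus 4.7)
The plan is to show $|\rho^{a}(x)-\rho^{\Delta}(x)|=o(x^{\log 3/\log 5})$; since the Dirichlet Laplacian on SG is known from spectral decimation to satisfy $\rho^{\Delta}(x)x^{-\log 3/\log 5}-\chi(\log x)\to 0$ for the $\log 5$-periodic non-constant $\chi$ named in the statement (see, e.g.,~\cite{Kigamibook,Strichartzbook,MT}), this immediately yields the corollary, and makes it clear that $\chi$ is independent of $a$. The lower bound will come from Theorem~\ref{thm:spectrumofMaga}, and the upper bound from a Dirichlet--Neumann bracketing argument at scale $n$ combined with the cellwise Coulomb gauge.

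For the lower bound, Theorem~\ref{thm:spectrumofMaga} produces, for every Laplacian eigenvalue $\lambda$ with $m(\lambda)>n$, a gauge-transformed magnetic eigenfunction on the (simple chain of) cells on which the corresponding basis Laplacian eigenfunction from~(S4) or~(S5) is supported. Linear independence of the original basis transfers to linear independence of the gauge transforms, so $\lambda$ appears in $\sigma(\Mag^{a})$ with at least its Laplacian multiplicity, and hence $\rho^{a}(x)\geq\rho^{\Delta}(x)-\rho^{\Delta}_{\leq n}(x)$, where $\rho^{\Delta}_{\leq n}$ counts Laplacian eigenvalues with $m(\lambda)\leq n$. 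To bound $\rho^{\Delta}_{\leq n}(x)$ I use (S\ref{S:specdec})--(S5): for each birth generation $k\leq n$ the eigenspace dimension is at most $\frac{1}{2}(3^{k}-3)$, while the number of distinct continuous eigenvalues born at $k$ with generation of fixation $m_{1}$ is at most $2^{m_{1}-k}$ from the $\pm$-branching in the spectral decimation recurrence. Since $\lambda=5^{m_{1}}\mathcal{R}(\lambda_{m_{1}})$ with $\mathcal{R}$ bounded on $[0,5]$, the condition $\lambda\leq x$ forces $m_{1}\leq (\log x)/\log 5+O(1)$; summing first over $m_{1}$ and then over $k\leq n$ yields $\rho^{\Delta}_{\leq n}(x)=O_{n}(x^{\log 2/\log 5})$, which is $o(x^{\log 3/\log 5})$ because $\log 2<\log 3$.

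For the upper bound, consider the Neumann-decoupled magnetic form $\DF^{a,N,n}(f)=\sum_{|w|=n}\DF^{a}_{X_{w}}(f|_{X_{w}})$ on the direct sum $\bigoplus_{|w|=n}\domDF_{0}(X_{w})$ of functions that are cellwise in $\domDF$ and vanish on $V_{0}$, but are not required to be continuous at points of $V_{n}\setminus V_{0}$. On each $n$-cell $X_{w}$ the Coulomb-gauge function $e^{iA_{w}}$ is unitary on $L^{2}(X_{w})$ and intertwines $\DF^{a}_{X_{w}}$ with $\DF_{X_{w}}$ by~\eqref{eqn:gaugeforDF}, so the cellwise assemblage gives an isometric isomorphism identifying $\DF^{a,N,n}$ with the analogous Neumann decoupling of $\DF$ at scale $n$; in particular $\rho^{a,N,n}(x)=\rho^{\Delta,N,n}(x)$. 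Since $\domDF_{0}$ sits inside the decoupled space with codimension at most $|V_{n}\setminus V_{0}|=\frac{3}{2}(3^{n}-1)$, the min--max principle yields $\rho^{a}(x)\leq \rho^{a,N,n}(x)=\rho^{\Delta,N,n}(x)\leq \rho^{\Delta}(x)+\frac{3}{2}(3^{n}-1)$, with the last term independent of $x$.

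Combining the two inequalities gives $|\rho^{a}(x)-\rho^{\Delta}(x)|\leq O_{n}(x^{\log 2/\log 5})+O(3^{n})=o(x^{\log 3/\log 5})$, and the corollary follows by dividing by $x^{\log 3/\log 5}$ and invoking the known Laplacian asymptotic. The main technical obstacle is the counting in the second paragraph: one has to extract from the spectral decimation structure (the branching of $\Phi_{\pm}$ and the eigenspace-dimension formulas of~(S3)--(S5)) that eigenvalues of birth $\leq n$ contribute only $O(x^{\log 2/\log 5})$. The bracketing step is then a routine finite-rank perturbation, since the Coulomb gauge is genuinely global on each $n$-cell so the identification $\rho^{a,N,n}=\rho^{\Delta,N,n}$ is exact.
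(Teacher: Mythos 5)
Your argument is correct in outline but takes a genuinely different route from the paper's. The paper counts the whole spectrum of $\Mag^{a}$ directly: using the discussion at the start of Section~\ref{sec:spectra} it asserts that $\Mag^{a}$ obeys spectral decimation for $m\geq n$, so that every eigenvalue below $x$ arises from an eigenvalue of the finite matrix $\Mag_{m_{1}}^{a_{m_{1}}}$ with $m_{1}$ comparable to $\log_{5}x$, and then observes that only the eigenvalues with generation of birth $\leq n$ can differ from those of $\Delta_{m_{1}}$ and that these are asymptotically few. Your two-sided scheme replaces the completeness half of that argument (the claim that there are no ``extra'' magnetic eigenvalues) by Dirichlet--Neumann bracketing at scale $n$ together with the cellwise unitary gauge: decoupling at $V_{n}\setminus V_{0}$, conjugating each cell by $e^{iA_{w}}$ via the localized form of~\eqref{eqn:gaugeforDF}, and paying only the codimension $\frac{3}{2}(3^{n}-1)$ gives $\rho^{a}(x)\leq\rho^{\Delta}(x)+\frac{3}{2}(3^{n}-1)$ without invoking spectral decimation for $\Mag^{a}$ at all; this is a softer and arguably more robust device. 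Your count of the low-birth eigenvalues, $O_{n}(x^{\log 2/\log 5})$ obtained from the $2^{m_{1}-k}$ branching, is in fact more careful than the paper's (which asserts a bound ``comparable to $3^{n}$''); both are $o(x^{\log 3/\log 5})$, which is all that is needed.

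The one step you should not wave at is ``linear independence of the original basis transfers to linear independence of the gauge transforms.'' Each basis eigenfunction $f_{j}$ is multiplied by a \emph{different} gauge $e^{-iA^{(j)}}$ adapted to its own chain, and on a cell $Y$ lying in several chains these gauges agree only up to unimodular constants $c_{j,Y}$ depending on both $j$ and $Y$; a vanishing combination $\sum_{j}\alpha_{j}f_{j}e^{-iA^{(j)}}=0$ therefore only yields the twisted relations $\sum_{j}\alpha_{j}c_{j,Y}f_{j}|_{Y}=0$ on each cell, from which $\alpha_{j}=0$ does not follow from linear independence of the $f_{j}$ alone. One can repair this by running the paper's inductive chain argument cell by cell, or by exhibiting on each chain a cell where the relevant local restrictions are independent, but as written the multiplicity claim --- and hence the inequality $\rho^{a}(x)\geq\rho^{\Delta}(x)-\rho^{\Delta}_{\leq n}(x)$ --- is asserted rather than proved. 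To be fair, the same issue is implicit in the paper's own proof, which never addresses multiplicities of the transferred eigenvalues explicitly either.
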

\begin{proof}
For $a=0$ this is simply the spectral asymptotic for the Laplacian, and follows from a more general analysis in~\cite{KigLap}.  When $a\neq0$ the result follows from the fact that eigenvalues with generation of birth less than $n$ make an asymptotically small contribution to the spectrum.   To make this precise we reason as follows.  

The eigenvalues and eigenfunctions of $\Mag^{m}$ obey spectral decimation for all sufficiently large $m$, so for each eigenvalue $\lambda$ there is a sequence $\lambda_{m}$, $m\geq m_{0}$ as in~(S\ref{S:specdec})  and the eigenvalue is determined at the generation of fixation as described in~(S\ref{S:fixation}).  Following this line of reasoning, for a specified $x$ there is $m_{1}$ comparable to $\log x$ such that all eigenvalues $\lambda\leq x$ are of the form $5^{m_{1}}\Phi(\lambda_{m_{1}})$ for $\lambda_{m_{1}}$ an eigenvalue of $\Mag_{m_{1}}^{a_{m_{1}}}$. Hence it suffices to know what proportion of the eigenvalues of $\Mag_{m_{1}}^{a_{m_{1}}}$ have generation of birth $\leq n$.  At each $m$ the number of newly born eigenvalues is comparable to $3^{m}$, and these split according to the positive and negative roots in the spectral decimation to give a multiple of $2^{m_{1}-m}3^{m}$ eigenvalues at the generation of fixation, so the number of eigenvalues born before $n$ but fixed at $m_{1}$ is comparable to $3^{n}$, while the total number fixed at $m_{1}$ is comparable to $3^{m_{1}}$.  Thus the proportion of eigenvalues of $\Mag^{a}$ that differ from those of $\Delta$ and are less than $x$ is bounded by a multiple of $3^{n}/x$ for large $x$, and goes to zero as $x\to\infty$.
\end{proof}

Theorem~\ref{thm:spectrumofMaga} also gives all of the spectrum of $e^{i\beta b}$ except that born at generation $1$.  We can get the rest by direct computation.  If we label the points of $V_{1}\setminus V_{0}$ as $q_{j}$, $j=0,1,2$, then symmetry suggests we ought to have eigenfunctions $f_{k}$ of $\Mag_{1}^{a_{1}}$ with values on $V_{1}$ given by $f_{k}(q_{j})=e^{ijk2\pi/3}$.  Indeed 
\begin{equation*}
	\Mag_{1}^{\beta b}f_{k}=\Bigl( 4 - 2\cos \Bigl( \frac{2\pi k}{3} + \frac{2\beta}{\sqrt{30}} \Bigr) \Bigr)\, f_{k}
	\end{equation*}
from which we can determine the eigenfunctions by applying $5\Phi$.  Ideally we would like to be able to use this information to compute the bottom of the spectrum for $\Mag^{a}$ in the case where $a$ is given by~\eqref{eqn:nonexactmag}, at least in some special cases, but unfortunately we do not know how to do this.


\section{Spectrum of $\Mag^{\beta b}$ via the ladder fractafold}\label{sec:ladderfractafold}

An alternative approach to the problem of determining the spectrum of $\Mag^{\beta b}$ is to lift the problem to a periodic version on a suitable covering space using a technique from~\cite{ST12}.  To avoid merely repeating the results of the previous section we illustrate this method by computing the spectrum of the Neumann magnetic operator.

The space we use is called the Ladder Fractafold based on the Sierpinski gasket, and is denoted LF.  \cite{ST12} gives a general method for analyzing the spectrum of a fractafold constructed by gluing copies of SG arranged according to a graph.    For LF,  let the vertices of a graph $\Gamma_0$  be three copies of $\mathbb{Z}$, labelled  $\{x_{k+\frac{1}{2}}\}$, $\{w_k\}$, and $\{y_{k+\frac{1}{2}}\}$ and the edges  be such that $w_{k}$, $x_{k-\frac{1}{2}}$, $x_{k+\frac{1}{2}}$ is a complete graph on $3$ vertices, and so is $w_{k}$, $y_{k-\frac{1}{2}}$, $y_{k+\frac{1}{2}}$.  Then LF is obtained by replacing these complete $3$-graphs with copies of SG, see Figure~\ref{fig:LF}.

\begin{figure}
\includegraphics[width=.9\textwidth]{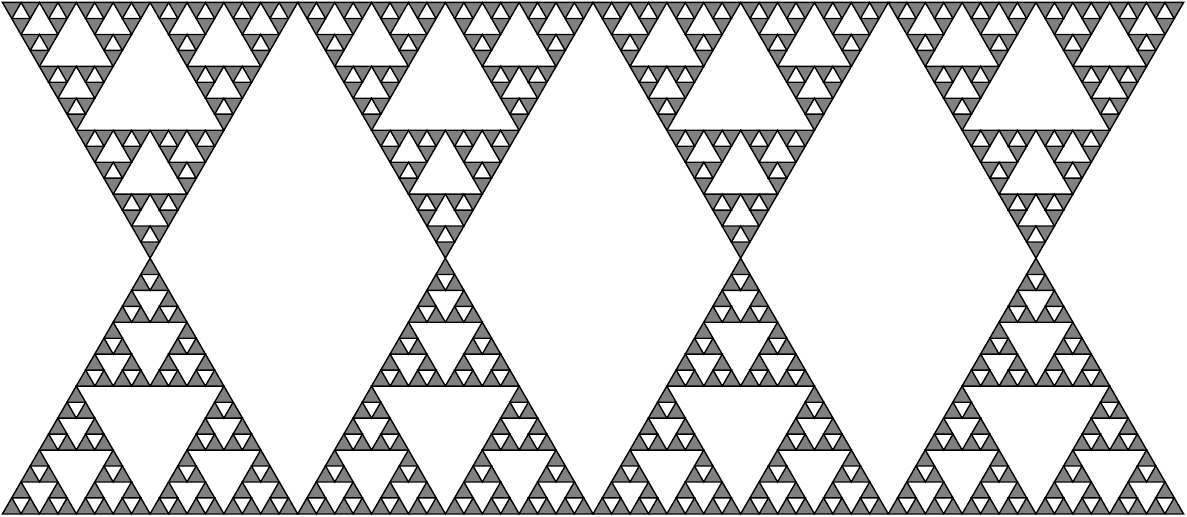}
\caption{The Ladder Fractafold}\label{fig:LF}
\end{figure}

According to the analysis in~\cite{ST12}, the spectrum of LF can be determined from the graph of the cells and their connectivity.  If we label the cell with vertices $w_{k}$, $x_{k-\frac{1}{2}}$, $x_{k+\frac{1}{2}}$ by $a_{k}$ and that with vertices $w_{k}$, $y_{k-\frac{1}{2}}$, $y_{k+\frac{1}{2}}$ by $b_{k}$ and treat $\{a_{k}\}\cup\{b_{k}\}$ as vertices of a graph $\Gamma$ with edges when the corresponding cells intersect, then $\Gamma$ is a ladder as shown in Figure~\ref{fig:GammaGraphs}.    If $-\Delta_{\Gamma}$ is the usual discrete Laplacian on $\Gamma$ it has absolutely continuous spectrum $[0,6]$.  One can prove the resolvent is unbounded by considering two sets of functions that satisfy an eigenfunction equation but are not in $L^{2}$: $\{\phi_{\theta}\}$  such that $\phi_{\theta}(a_{k})=\phi_{\theta}(b_{k})=e^{ik\theta}$  with eigenvalue $2-2\cos\theta$ (these are even in the reflection exchanging $a_{k}$ and $b_{k}$), and $\{\psi_{\theta}\}$  such that $\psi_{\theta}(a_{k})=-\psi_{\theta}(b_{k})=e^{ik\theta}$ with eigenvalue $4-2\cos\theta$ (these are odd in the reflection exchanging $a_{k}$ and $b_{k}$).  In both cases $0\leq\theta\leq\pi$.  From Theorem~3.1 of~\cite{ST12} and their discussion in Example~5.2, this spectrum is the same as that of $ -\Delta_{\Gamma_{0}}$.  Moreover they relate the spectrum $\sigma(-\Delta_{\Gamma_{0}})$ to that of the Laplacian on the fractafold as follows.

\begin{figure}
\includegraphics[width=\textwidth]{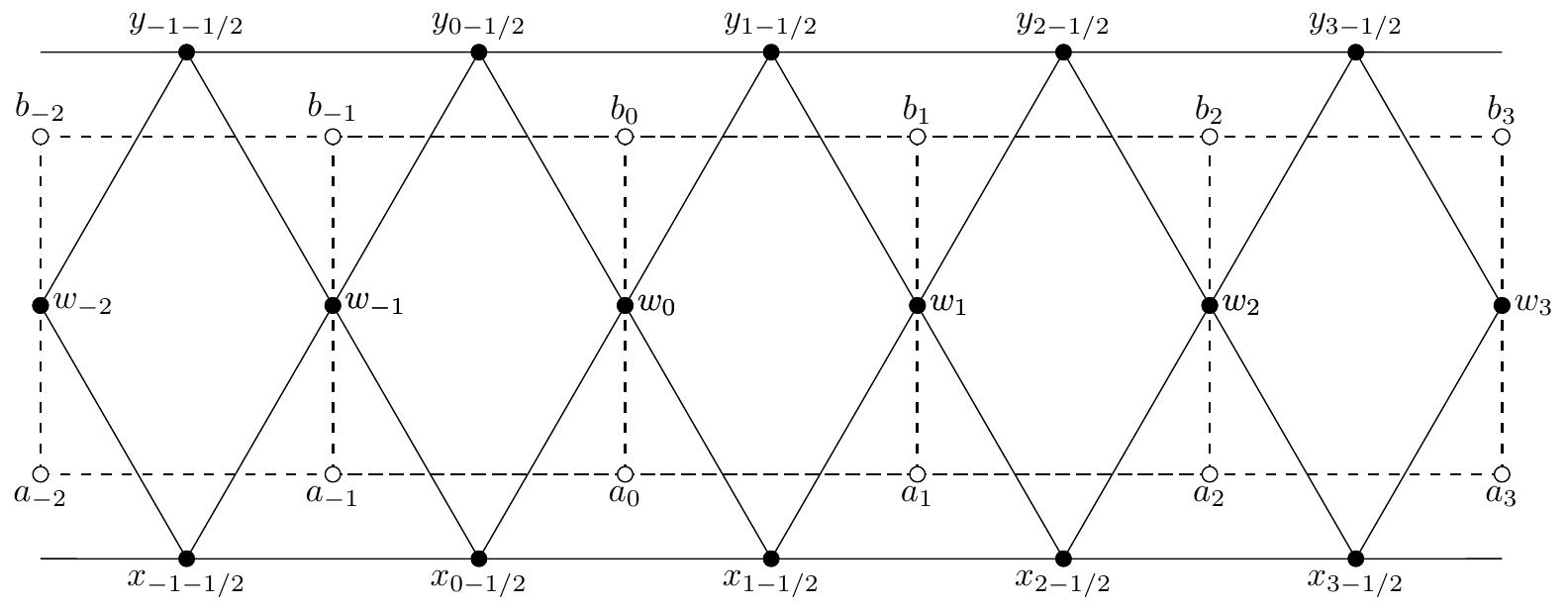}
\caption{The graphs $\Gamma_0$ (unfilled verteces and dashed edges), and $\Gamma$ (filled verteces, solid edges)}\label{fig:GammaGraphs}
\end{figure}

\begin{thm}[\protect{Theorem~2.3 of~\cite{ST12}}] \label{BigTheorem}
Using the function $\mathcal{R}$ from~(S\ref{S:fixation}) let
\begin{gather*}
	\Sigma_{\infty} = 5\biggl( \mathcal{R}\{2\} \cup \bigcup_{0}^{\infty} 5^{m}\mathcal{R}\{3,5\}\biggr),\\
	\Sigma'_{\infty} = 5\biggl( \bigcup_{m=0}^{\infty} 5^{m} \mathcal{R}\{3,5\}\biggr)\subset\Sigma_{\infty}.
	\end{gather*}
Then for $\Delta$ the Laplacian on the fractafold obtained by gluing according to $\Gamma_{0}$ 
\begin{equation*}
	 \mathcal{R}(\sigma (- \Delta_{\Gamma_0})) \cup \Sigma_{\infty}' \subset \sigma (- \Delta_{\text{LF}}) \subset\ \mathcal{R} (\sigma (- \Delta_{\Gamma_0})) \cup \Sigma_{\infty}.
	\end{equation*}
\end{thm}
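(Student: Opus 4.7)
The plan is to reduce the eigenvalue problem for $-\Delta_{\text{LF}}$ to a combination of a graph eigenvalue problem on $\Gamma_0$ and localized Dirichlet-type eigenfunctions, via cell-by-cell spectral decimation. Each cell of $\text{LF}$ is a copy of $\SG$; if $f$ satisfies $-\Delta_{\text{LF}} f = \lambda f$ then its restriction to each cell is an eigenfunction of $-\Delta$ on $\SG$ with the same eigenvalue, and is therefore governed by spectral decimation as in~(S\ref{S:specdec})--(S\ref{S:fixation}). The cells are glued by continuity at junction points and by the vanishing of the sum of normal derivatives at each junction point.

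For the inclusion $\mathcal{R}(\sigma(-\Delta_{\Gamma_0})) \cup \Sigma'_\infty \subset \sigma(-\Delta_{\text{LF}})$, I would construct eigenfunctions explicitly. Given $\mu \in \sigma(-\Delta_{\Gamma_0})$ with a (possibly generalized) eigenfunction $g$ on the vertices of $\Gamma_0$, assign $g$ as boundary data at junction points of each cell and apply the $\Phi_-$ branch of spectral decimation iteratively to extend to an eigenfunction of $-\Delta$ on each cell at eigenvalue $\mathcal{R}(\mu)$. Continuity at junctions is built in; the matching of normal derivatives translates, via the explicit formulas of~\cite{DRS}, into the graph equation $-\Delta_{\Gamma_0} g = \mu g$, so the patched function lies in $\dom(\Delta_{\text{LF}})$ and is an eigenfunction at $\mathcal{R}(\mu)$. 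For $\lambda \in \Sigma'_\infty$, the eigenfunctions are of $5$-series or $6$-series type supported on one or two adjacent cells with vanishing boundary values at $V_0$; they extend by zero across junction points and automatically satisfy the gluing conditions.

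For the reverse inclusion, let $f$ be an eigenfunction of $-\Delta_{\text{LF}}$ at $\lambda$, extract the decimation sequence $\{\lambda_m\}$ as in~(S\ref{S:specdec}), and identify the generation of fixation from~(S\ref{S:fixation}) so that $\lambda = 5^{m_1}\mathcal{R}(\lambda_{m_1})$. If no $\lambda_m$ lies in the forbidden set $\{2,3,5\}$, then reading the cell-level decimation across each junction point shows that the restriction of $f$ to the vertices of $\Gamma_0$ is a (possibly generalized) eigenfunction of $-\Delta_{\Gamma_0}$ with eigenvalue $\lambda_0$, giving $\lambda = \mathcal{R}(\lambda_0) \in \mathcal{R}(\sigma(-\Delta_{\Gamma_0}))$. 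Otherwise $\lambda_m$ hits a forbidden value at some generation of birth, and $\lambda$ must lie in the exceptional set $\Sigma_\infty$ built from iterates of $\mathcal{R}$ applied to $\{2\}$ or $\{3,5\}$, the birth value determining which of the two pieces applies.

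The main obstacle is the forbidden-value analysis at levels where $\lambda_m \in \{2,3,5\}$: here the decimation step is not invertible, so a single eigenvalue may arise from either a globally propagating graph eigenfunction or a localized eigenfunction supported on chains of cells, and one must verify that the two families together account for exactly $\mathcal{R}(\sigma(-\Delta_{\Gamma_0})) \cup \Sigma_\infty$ with nothing extra. This amounts to a careful multiplicity bookkeeping, using the dimension formulas of~(S4)--(S5) applied cell-by-cell together with the junction-matching constraints, to show that $\Sigma'_\infty$ is always realized while the complementary part $\Sigma_\infty \setminus \Sigma'_\infty$ may or may not appear depending on whether the exceptional values survive the gluing. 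This bookkeeping is the technical heart of Theorem~2.3 of~\cite{ST12}.
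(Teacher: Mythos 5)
First, a point of context: the paper does not prove this statement at all --- it is quoted verbatim as Theorem~2.3 of~\cite{ST12} and used as a black box, so there is no internal proof to compare your sketch against. Your proposal is a reconstruction of the Strichartz--Teplyaev argument, and its overall architecture (cell-by-cell spectral decimation, normal-derivative matching at junction points encoding the graph eigenvalue equation on $\Gamma_0$, localized $5$- and $6$-series-type eigenfunctions accounting for $\Sigma'_\infty$, and the forbidden-value bookkeeping producing the gap between $\Sigma'_\infty$ and $\Sigma_\infty$) is the right one.

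There is, however, a genuine gap in how you handle the part of the spectrum coming from $\Gamma_0$. The graph $\Gamma_0$ is infinite and $\sigma(-\Delta_{\Gamma_0})$ is purely absolutely continuous (the paper notes it equals $[0,6]$, exhibited by the bounded non-$\ell^2$ families $\phi_\theta$, $\psi_\theta$). Consequently the ``eigenfunctions'' $g$ you propagate into the cells are only generalized eigenfunctions, and the patched function on $\text{LF}$ is bounded but not in $L^2$; your assertion that it ``lies in $\dom(\Delta_{\text{LF}})$ and is an eigenfunction at $\mathcal{R}(\mu)$'' is therefore false as stated, and the forward inclusion $\mathcal{R}(\sigma(-\Delta_{\Gamma_0}))\subset\sigma(-\Delta_{\text{LF}})$ does not follow from it. One must instead show membership in the spectrum by a Weyl-sequence argument (suitable truncations of the lifted generalized eigenfunctions) or, equivalently, by showing the resolvent is unbounded along these functions --- which is exactly the route the paper gestures at. The same issue infects your reverse inclusion: continuous spectrum cannot be analyzed ``eigenfunction by eigenfunction'' via a decimation sequence $\{\lambda_m\}$ attached to an individual $L^2$ eigenfunction, since for $\lambda\in\mathcal{R}(\sigma(-\Delta_{\Gamma_0}))\setminus\Sigma_\infty$ there is no such eigenfunction. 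The actual proof in~\cite{ST12} works at the level of the spectral resolution, establishing a unitary equivalence between $-\Delta_{\text{LF}}$ restricted to the orthogonal complement of the localized eigenfunctions and the operator $\mathcal{R}(-\Delta_{\Gamma_0})$ obtained by the functional calculus; the two-sided inclusion (rather than an equality) reflects precisely the ambiguity at the exceptional values that you correctly flag in your last paragraph. So: right skeleton, but the passage between generalized eigenfunctions and spectrum is the missing load-bearing step, not merely the multiplicity bookkeeping.
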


To connect this to the study of the magnetic operator $\Mag^{\beta b}$ we ``fold'' the ladder along the center-line parallel to its length, so the point $x_{k+\frac{1}{2}}$ is identified with $y_{k+\frac{1}{2}}$ for all $k$, and obtain a fractafold is in Figure~\ref{fig:FLF}, which we call the folded ladder fractafold, or FLF.   The FLF is a covering space for SG in which the loop around the central hole of the $V_1$ graph has been trivialized. The covering map takes each cell $a_{k}$ in the fractafold to a $1$-cell of SG in a $3$-periodic manner, identifying $a_{k}$ with the cell $F_{k\,\text{mod}\,3}(SG)$, $w_{k}$ with $p_{k\,\text{mod}\,3}\in V_{0}$ and mapping both $x_{k+\frac{1}{2}}$ and $y_{k+\frac{1}{2}}$ to the same point of $V_{1}\setminus V_{0}$. We arrange the map so that the line through the $x_{k+\frac{1}{2}}$ wraps in a counterclockwise direction around the central hole in the $V_{1}$ graph as $k$ increases.

\begin{lem}
There is a bijection taking each Neumann eigenfunction $f$ of $\Delta$ on SG with eigenvalue $\lambda$ to a solution $\tilde{f}$ of $\Delta_{\text{LF}}\tilde{f}=\lambda\tilde{f}$ which is symmetrical under the central line reflection  and is $3$-periodic.
\end{lem}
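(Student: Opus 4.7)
The plan is to exhibit the bijection through a discrete group action on LF whose quotient is SG as a fractafold. Let $G_{0}$ be the group acting on LF generated by the shift $\tau\colon k\mapsto k+3$ and the central line reflection $\sigma$, which exchanges $a_{k}\leftrightarrow b_{k}$ and $x_{k+1/2}\leftrightarrow y_{k+1/2}$ while fixing each $w_{k}$. Both $\tau$ and $\sigma$ are fractafold automorphisms of LF: they permute cells by isometries and preserve the vertex gluings. In particular they preserve the self-similar energy form, measure, and thus $\Delta_{\text{LF}}$, so $\Delta_{\text{LF}}$ commutes with $G_{0}$ and restricts to the closed subspace of $G_{0}$-invariant functions. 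A function $\tilde f$ on LF is $3$-periodic and symmetric under the central reflection precisely when it is $G_{0}$-invariant.

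The key geometric step is to identify $\text{LF}/G_{0}$ with SG as a fractafold. Under $G_{0}$ the cells collapse to three orbits $[a_{0}],[a_{1}],[a_{2}]$ (with $b_{k}$ identified to $a_{k}$ by $\sigma$ and $a_{k}$ identified to $a_{k+3}$ by $\tau$), and the vertices collapse to six orbits: three $[w_{j}]$'s (one per residue mod $3$) and three orbits $[x_{1/2}],[x_{3/2}],[x_{5/2}]$ of the identified points $x_{k+1/2}=y_{k+1/2}$. A direct inspection of incidences shows that $[a_{j}]$ has vertices $[w_{j}]$ together with two of the $[x_{\cdot}]$'s, that each $[x_{\cdot}]$ is shared by exactly two of the $[a_{j}]$'s, and that each $[w_{j}]$ is a vertex of only $[a_{j}]$; this is exactly the $V_{1}$ incidence pattern of SG. Sending $[a_{j}]\mapsto F_{j}(\SG)$, $[w_{j}]\mapsto p_{j}$, and the three $[x_{\cdot}]$'s to the points of $V_{1}\setminus V_{0}$ in the order determined by the counterclockwise convention of the covering map yields a fractafold isomorphism $\text{LF}/G_{0}\cong\SG$. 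Push-down through this isomorphism sends $G_{0}$-invariant $\lambda$-eigenfunctions of $\Delta_{\text{LF}}$ to $\lambda$-eigenfunctions of $\Delta$ on SG, and pull-back inverts the correspondence.

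The remaining check, where I expect the main care is needed, is that the SG eigenfunctions obtained are exactly the Neumann ones. Membership of $\tilde f$ in $\dom(\Delta_{\text{LF}})$ forces the sum of normal derivatives from the cells meeting at each vertex to vanish. At $w_{k}$ only $a_{k}$ and $b_{k}$ meet, and $\sigma$ swaps them while fixing $w_{k}$; so $\sigma$-invariance of $\tilde f$ forces the two normal derivatives to agree, and their vanishing sum then forces each individually to be zero. Pushed down, this says $df(p_{j})=0$ from the unique cell $F_{j}(\SG)$ meeting $p_{j}\in V_{0}$, which is the Neumann condition at $V_{0}$. At each interior vertex $[x_{\cdot}]$ exactly two cells meet both upstairs in LF and downstairs in SG, so the matching-of-normal-derivatives condition passes through unchanged, giving the standard gluing condition for $f\in\dom(\Delta_{\SG})$. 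Combining these points yields the claimed bijection; the expected obstacle is simply the combinatorial bookkeeping for the quotient together with the observation that the reflection symmetry at $w_{k}$ forces Neumann, rather than Dirichlet, conditions at $V_{0}$.
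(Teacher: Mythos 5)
Your argument is correct and is essentially the paper's proof: the paper factors the same quotient through the folded ladder FLF (quotient by the reflection, then the $3$-periodic covering of SG), and the decisive point in both is identical, namely that reflection symmetry at $w_k$ forces the two normal derivatives from $a_k$ and $b_k$ to agree, so the gluing condition there becomes the Neumann condition at $V_0$. Your version merely performs the two quotients in one step via the group generated by the shift and the reflection, and spells out the cell-incidence bookkeeping more explicitly than the paper does.
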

\begin{proof}
For the definition and properties of the Laplacian on LF and FLF we refer to~\cite{ST12}.  A function  satisfying $\Delta_{\text{FLF}}\hat{f}=\lambda\hat{f}$ on  FLF unfolds to give a function $\tilde{f}$ on LF.  This function satisfies $\Delta_{\text{LF}}\tilde{f}=\lambda\tilde{f}$ if and only if its normal derivatives at each $w_{k}$ sum to zero; given the symmetry, this happens if and only if $d\hat{f}=0$ at all points $w_{k}$.  At the same time, the period $3$ covering of SG by FLF ensures that $3$-periodic solutions of $\Delta_{\text{FLF}}\hat{f}=\lambda\hat{f}$ on FLF correspond to eigenfunctions on SG in such a way that the normal derivatives at points $w_{k}$ correspond to those on $V_{0}$.
\end{proof}
\begin{rmk}
We could do something similar for the Dirichlet eigenfunctions on SG by considering antisymetry in the center line.
\end{rmk}

\begin{figure}
\includegraphics[width=\textwidth]{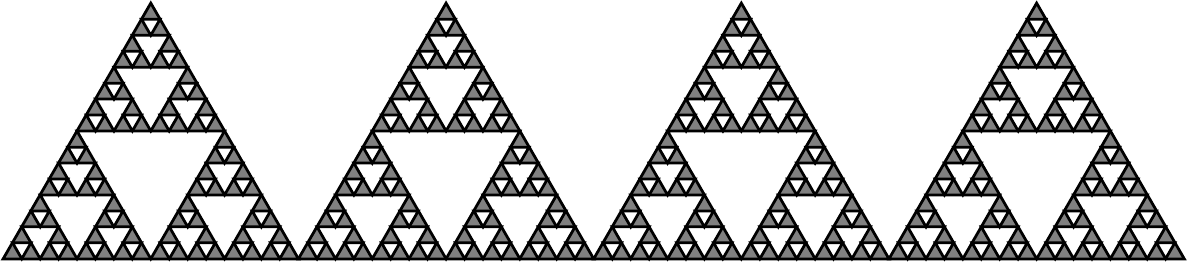}
\caption{The folded Sierpinski Ladder Fractafold}\label{fig:FLF}
\end{figure}

More importantly, the same thing happens for the magnetic operator $\Mag^{\beta b}$.  The only modification required for the proof is that the symmetric unfolding of a solution of  $\Delta_{\text{FLF}}\hat{f}=\lambda\hat{f}$ from FLF to LF gives a solution of $\Delta_{\text{LF}}\tilde{f}=\lambda\tilde{f}$  if and only if $d^{\beta b}\hat{f}(w_{k})$ sums to zero for all $k$.  However~\eqref{eqn:magnormalderivfromusual} and the fact that $db=0$ sums to zero at each $w_{k}$ ensures the Neumann condition is still the correct one. To make this argument we need $1$-forms and magnetic forms on LF and FLF; their properties are substantially similar to those on SG, and we refer to~\cite{IRT,HR} for more details. 
\begin{cor}
There is a bijective map which takes each Neumann eigenfunction of $\Mag^{\beta b}$ on SG  with eigenvalue $\lambda$ to a solution $\tilde{f}$ of $\Mag^{\beta b}_{\text{LF}}\tilde{f}=\lambda\tilde{f}$ that is symmetrical under the central line reflection  and $3$-periodic.  Here $\Mag^{\beta b}_{\text{LF}}$ is the magnetic operator corresponding to the symmetric $3$-periodic lift of $\beta b$ to LF.
\end{cor}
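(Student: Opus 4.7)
The plan is to adapt the proof of the preceding lemma for the Laplacian, substituting the magnetic Gauss--Green formula~\eqref{eqn:magGG} and the magnetic normal derivative identity~\eqref{eqn:magnormalderivfromusual} for their non-magnetic counterparts. First I define the map: given a Neumann eigenfunction $f$ of $\Mag^{\beta b}$ on SG with eigenvalue $\lambda$, pull $f$ back through the covering map $\text{FLF}\to\SG$ to obtain a $3$-periodic function $\hat f$ on FLF, and then unfold along the central line reflection to obtain a reflection-symmetric $3$-periodic function $\tilde f$ on LF. The inverse map restricts a symmetric $3$-periodic solution on LF to one side of the reflection axis to produce a function on FLF, which descends to SG via the $3$-periodic covering.

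On each cell of LF, $\tilde f$ is simply the pullback of $f$ from the corresponding $1$-cell on SG, so the cell-by-cell eigenfunction equation $\Mag^{\beta b}_{\text{LF}}\tilde f=\lambda \tilde f$ holds from the corresponding equation for $f$ on SG (using the local gauge transformation~\eqref{eqn:gaugeforMag} together with the fact that $\tilde a$, the symmetric $3$-periodic lift of $\beta b$, agrees with the pullback of $\beta b$ on each cell). By Theorem~\ref{thm:gluing}, the global eigenfunction equation on LF then holds if and only if the magnetic normal derivatives sum to zero at every vertex of LF. At the vertices $x_{k+\frac12}$ and $y_{k+\frac12}$, which cover the interior points of $V_1\setminus V_0$ on SG where two $1$-cells meet, this gluing condition pulls back to the same gluing condition already satisfied by $f\in\dom(\Mag^{\beta b})$, hence is automatic.

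The remaining case, and the main technical point, is the spine vertices $w_k$, which cover points $p\in V_0$ on SG. At each $w_k$ exactly two cells of LF meet, namely $a_k$ and $b_k$, which are exchanged by the reflection. By symmetry of $\tilde f$ and of $\tilde a$, equation~\eqref{eqn:magnormalderivfromusual} gives
\[
	d^{\tilde a}\tilde f\bigl|_{a_k}(w_k) + d^{\tilde a}\tilde f\bigl|_{b_k}(w_k)
	= 2\bigl( df(p) + i f(p)\, dA_p(p) \bigr)
	= 2\, d^{\beta b}f(p),
\]
where $A_p$ is the local Coulomb gauge for $\beta b$ near $p$ (which exists because $\beta b$ is locally exact and $p$ lies in only one $1$-cell of SG). Thus the sum vanishes at every $w_k$ if and only if $d^{\beta b}f(p)=0$ at every $p\in V_0$, which is exactly the Neumann condition on SG. Reversing this argument yields the opposite direction of the bijection.

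The main obstacle I anticipate is the careful verification that the symmetric $3$-periodic lift $\tilde a$ of $\beta b$ to LF is well-defined as a $1$-form with a local Coulomb gauge whose potential $\tilde A$ on each of $a_k$ and $b_k$ agrees (up to the reflection) with a common $A_p$ at $w_k$. Since $b$ has a definite orientation (counterclockwise around the central hole of the $V_1$ graph) the reflection must be chosen to be compatible with this orientation; equivalently, the identification of $x_{k+\frac12}$ with $y_{k+\frac12}$ in forming FLF from LF must be arranged so that the pullback of $b$ is invariant under the folding. Once this is verified, the argument above, essentially the one sketched in the paragraph preceding the corollary, completes the proof.
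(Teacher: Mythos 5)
Your proposal is correct and follows essentially the same route as the paper: unfold/fold through FLF, observe that the gluing conditions at the rung vertices $x_{k+\frac12}$, $y_{k+\frac12}$ pull back to the interior matching conditions already satisfied by $f$, and reduce the condition at the spine vertices $w_k$ to the magnetic Neumann condition on $V_0$ via~\eqref{eqn:magnormalderivfromusual}. The orientation compatibility of the lift of $b$ with the folding, which you flag as the remaining point to verify, is exactly the issue the paper disposes of by noting that the normal derivative of the gauge potential for $b$ vanishes at the points of $V_0$, so your argument closes in the same way.
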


The preceding result is significant because passing to FLF trivializes the loop where $b$ is not exact, so we might expect $\beta b$ to be exact on FLF.   This is not literally true because the periodic extension of $\beta b$ to FLF will not have finite energy, simply because it is periodic.  However our reasoning regarding the gauge transformation is still valid: we can define $e^{i\beta B}$, which is globally continuous and locally in the domain of the Dirichlet form on FLF, such that $\Mag^{\beta b}f=e^{-i\beta B}\Delta_{\text{FLF}}(e^{i\beta B}f)$ for any $f$ in the domain of $\Mag^{\beta b}$ with compact support, and take limits to extend this operation to $L^{2}$.

\begin{thm}
The spectrum of the  Neumann magnetic operator $\Mag^{\beta b}$ on SG is
\begin{equation*}
\sigma (M^{\beta b}) = \mathcal{R} \Bigl\{ 2 - 2\cos\bigl( \frac{2k\pi}{3} - \frac{2\beta}{\sqrt{30}} \bigr)  \Bigr\}^{2}_{k=0} \cup \Sigma_{\infty}'
\end{equation*}
\end{thm}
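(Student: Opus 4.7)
The plan is to adapt the reduction developed earlier in the section. By the preceding corollary, Neumann eigenfunctions of $\Mag^{\beta b}$ on $\SG$ with eigenvalue $\lambda$ correspond bijectively to $3$-periodic, centerline-symmetric solutions $\tilde{f}$ of $\Mag^{\beta b}_{\text{LF}}\tilde{f}=\lambda\tilde{f}$; equivalently, on the folded ladder $\text{FLF}$, they are the $3$-periodic eigenfunctions of $\Mag^{\beta b}_{\text{FLF}}$. I would work on $\text{FLF}$ because the covering map $\text{FLF}\to\SG$ trivializes the loop around the central hole of $\SG$, so the lift of $\beta b$ admits a global Coulomb gauge: there is a continuous function $e^{i\beta B}$, locally in $\domDF$, such that $e^{-i\beta B}\partial e^{i\beta B}$ equals the lift of $\beta b$. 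Consequently $\Mag^{\beta b}_{\text{FLF}}$ conjugates to $\Delta_{\text{FLF}}$ via $\tilde{f}\mapsto h=e^{i\beta B}\tilde{f}$, exactly as in the discussion immediately preceding this theorem.

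The key observation is how the $3$-periodicity of $\tilde{f}$ translates to $h$. Writing $T$ for the shift $k\mapsto k+3$ on $\text{FLF}$ (a generator of the deck transformation group of $\text{FLF}\to\SG$), the identity $h(Tx)=e^{i\beta(B(Tx)-B(x))}h(x)$ shows $h$ is quasi-$T$-periodic with phase equal to the flux $\Phi=\beta\oint_{\text{loop}}b$ accumulated on one orbit of $T$. A direct computation from Figure~\ref{fig:loopelement} shows that on each of the three edges forming the central loop of the $V_1$ graph $b$ takes value $2/\sqrt{30}$, whence $\Phi=\beta\cdot 3\cdot\frac{2}{\sqrt{30}}=\frac{6\beta}{\sqrt{30}}$. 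Thus the problem reduces to computing the spectrum of $-\Delta_{\text{FLF}}$ restricted to centerline-symmetric, quasi-$T$-periodic functions with twist $e^{i\Phi}$.

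Next I would invoke Theorem~\ref{BigTheorem} in this restricted setting. The spectral decimation portion $\mathcal{R}(\sigma(-\Delta_{\Gamma_0}))$ restricts to the $\mathcal{R}$-image of the eigenvalues of $-\Delta_\Gamma$ on the symmetric, quasi-periodic subspace. Symmetric eigenfunctions of the ladder $\Gamma$ are $\phi_\theta(a_k)=\phi_\theta(b_k)=e^{ik\theta}$ with eigenvalue $2-2\cos\theta$ (as computed in the discussion preceding this theorem), and the twisted periodicity $\phi_\theta(a_{k+3})=e^{i\Phi}\phi_\theta(a_k)$ forces $3\theta\equiv\Phi\pmod{2\pi}$, giving $\theta\in\bigl\{\tfrac{2\beta}{\sqrt{30}}+\tfrac{2k\pi}{3}:k=0,1,2\bigr\}$ and thus the three eigenvalues $2-2\cos\bigl(\tfrac{2k\pi}{3}-\tfrac{2\beta}{\sqrt{30}}\bigr)$, $k=0,1,2$ (using evenness and $2\pi$-periodicity of $\cos$). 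The $\Sigma_\infty'$ portion of the spectrum arises from eigenfunctions supported in one or two adjacent cells of $\text{FLF}$; these are insensitive to the global twist because they are compactly supported, and by Theorem~\ref{thm:conjugateLapefns} they survive the magnetic deformation with unchanged eigenvalues, so they descend to the $3$-periodic symmetric setting unchanged.

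The main obstacle is upgrading Theorem~\ref{BigTheorem} from an inclusion to the precise equality claimed here: one must check that no extraneous points of $\Sigma_\infty\setminus\Sigma_\infty'$ produce eigenfunctions simultaneously compatible with the centerline reflection symmetry and the $\mathbb{Z}/3$ quasi-periodicity, and conversely that every element of $\Sigma_\infty'$ really is realized after imposing those two constraints. Secondary care is required to make the gauge transform rigorous on the non-compact $\text{FLF}$ (where $B$ is only locally in $\domDF$, so the conjugation must be supplemented by an approximation by compactly supported $f$ and a limit in $L^2$), and to confirm the sign conventions for the flux so that the argument of the cosine matches the stated formula.
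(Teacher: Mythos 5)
Your proposal follows essentially the same route as the paper: reduce via the covering-space corollary to the ladder, gauge-transform with $e^{i\beta B}$, impose the ($3$-periodicity versus flux-twisted periodicity, which are equivalent) condition $3\theta \equiv \pm\frac{6\beta}{\sqrt{30}} \pmod{2\pi}$ on the symmetric ladder eigenfunctions $\phi_\theta$ to get the three cosine eigenvalues, and account for $\Sigma_\infty'$ via the compactly supported (localized) eigenfunctions. The gaps you flag -- sharpening the inclusion in Theorem~\ref{BigTheorem} to exclude $5\mathcal{R}\{2\}$ and making the gauge rigorous on the non-compact fractafold -- are exactly the points the paper also dispatches only briefly, so your account is a faithful match.
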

\begin{proof}
The periodic extension of $\beta b$ to FLF has gauge $e^{iB}$ where $B$ is harmonic on each cell $a_{k}$ and has values 
\begin{equation*}
	B(x_{k+\frac{1}{2}})
	= \frac{2\beta k}{\sqrt{30}}+\frac{1}{2}
	\end{equation*}
and $B(w_{k})=0$ for all $k$. We use the same notation for the symmetric extension to LF.  The gauge transformation is valid and reduces the problem to finding those elements of the spectrum of the Laplacian on LF for which the associated function is symmetric in the center line and, after application of the gauge transformation, is $3$-periodic.  By Theorem~\ref{BigTheorem} and elementary arguments about the eigenfunctions associated to $\Sigma_{\infty}$ and $\Sigma'_{\infty}$ this includes all of $\Sigma'_{\infty}$ but not $5\mathcal{R}\{2\}$.  The remaining values correspond to spectral values from the  symmetric functions $\phi_{\theta}$ on $\Gamma$.  According to~\cite{ST12} the corresponding functions on LF are equal to $e^{i(k+\frac{1}{2})\theta}$ at $x_{k+\frac{1}{2}}$ and $e^{ik\theta}$ at $w_{k}$.  When multiplied by the gauge, these have
\begin{equation*}
	e^{iB}\phi_{\theta}(x_{k+\frac{1}{2}})
	=\exp i\Bigl( \bigl(k+\frac{1}{2}\bigr)\theta + \frac{2\beta k}{\sqrt{30}}+\frac{1}{2}\Bigr)
	\end{equation*}
which is periodic of period $3$ in $k$ if and only if $3\theta+\frac{6\beta}{\sqrt{30}}\equiv0\mod2\pi$.  Using this to determine $\theta$, the fact that the eigenvalue on $\Gamma$ was $2-2\cos\theta$ and the preceding reasoning from Theorem~\ref{BigTheorem} completes the proof.
\end{proof}

\bibliography{magneticformsbibliography}
\bibliographystyle{amsplain}


\end{document}